\newcommand{\diag}{\operatorname{diag}} 
\newtheorem{example}{Example}
\newtheorem{remark}{Remark}
\newtheorem{definition}{Definition}
\newtheorem{theorem}{Theorem}
\newtheorem{lemma}{Lemma}
\newtheorem{proposition}{Proposition}
\def\be{\begin{equation}}
\def\ee{\end{equation}}
\def\ba{\begin{array}}
\def\ea{\end{array}}
\def\eqa{\begin{eqnarray}}
\def\eqe{\end{eqnarray}}
\title{\LARGE \bf A Game-theoretic Framework for Security-aware Sensor Placement Problem in Networked Control Systems}
\author{Mohammad Pirani, Ehsan Nekouei, Henrik Sandberg and Karl Henrik Johansson
\thanks{This work is supported by the Knut and Alice Wallenberg Foundation, the Swedish Foundation for Strategic Research and the Swedish Research Council. Authors are with the Department of Automatic Control, KTH Royal Institute of Technology. E-mail: {\tt \{pirani,nekouei,hsan,kallej\}@kth.se}.}%
}
\begin{document}
\maketitle
\thispagestyle{empty}
\pagestyle{empty}
\begin{abstract}
This paper studies the sensor placement problem in a networked control system for improving its security against cyber-physical attacks. The problem is formulated as a zero-sum game between an attacker and a detector. The attacker's decision is to select $f$ nodes of the network to attack whereas the detector's decision is to place $f$ sensors to detect the presence of the attack signals. In our formulation, the attacker minimizes its visibility, defined as the system $L_2$ gain from the attack signals to the deployed sensors' outputs, and the detector maximizes the visibility of the attack signals. The equilibrium strategy of the game determines the optimal locations of the sensors.  The existence of Nash equilibrium for the attacker-detector game is studied when the underlying connectivity graph is a directed or an undirected tree. When the game does not admit a Nash equilibrium, it is shown that the Stackelberg equilibrium of the game,  with the detector as the game leader, can be computed efficiently. Finally, the attacker-detector game is studied in a cooperative adaptive cruise control algorithm for vehicle platooning problem. The existence of Nash equilibrium is investigated for both directed and undirected platoons and the effect of the position of the reference vehicle on the game value is studied. Our results show that, under the optimal sensor placement strategy, an undirected topology provides a higher security level for a networked control system compared with its corresponding directed topology.
\end{abstract}
\section{Introduction}

\subsection{Motivation}
Applications of distributed control systems, ranging from power grids and smart buildings to intelligent transportation systems, have had a considerable growth. In this direction,  the need to do a rigorous research on the control-theoretic approaches to the security of these systems against failures and attacks, considering the physical limitations of the system, is seriously felt  \cite{Stankovic}. Several approaches have been proposed in the literature to tackle this issue
\cite{Fawzi, sandberg1,teixeira2010cyber, pasqualetti, James, brunosin} which are based on the system specifications and the attack strategy. An active line of research in this area is to consider the defense mechanism in the control system as a game between the attacker and the defender and optimize the actions of the defender against possible attack strategies. In this direction, the game objective can be the effect of the attack on the system in which the defender tries to minimize. However, one can use such game-theoretic approaches to increase the visibility and awareness of the attacker's actions, which the defender tries to maximize, and the problem introduced in this paper is of this kind.  To improve such an awareness against cyber-physical attacks, typically a set of monitoring sensors are deployed in the network and their outputs are used to monitor the security status of the system.

In a networked control system, the system designer determines the location of the monitoring sensors (or detectors). However, the security level not only depends on the sensors' locations but also on the nodes selected by the attacker to inject the attack signals. These decisions are made by different entities with conflicting objectives. In this paper, the sensor/attack placement problem is posed as a game between an attacker and a detector and the equilibrium solution of the game is used to determine the location of the sensors. This allows the system designer to anticipate the behavior of the attacker and decides the location of sensors such that the impact of the attacker's decision on the security level is minimized.  

\subsection{Related Work}
There is a vast literature on game-theoretic approaches to the security and resilience of control systems in the past decade \cite{zhubasar}. These approaches vary depending on the structure of the cyber-physical system or the specific type of malicious action acting on the cyber layer. In the earlier approach, at each layer (physical and cyber) a particular game is defined which emerges the concept of {\it games-in-games} that reflects two interconnected games, one in the physical layer and the other in the cyber layer where the payoff of each game affects the result of the other one \cite{Basarpan, Basarz}. In the latter approach (games based on the type of malicious action), depending on the type of the adversarial behavior (active or passive) appropriate game strategy, e.g., Nash or Stackelberg, was discussed \cite{Cedricbasar,EPFL}.  In addition to these approaches, the evolution of some network control systems are modeled as cooperative games \cite{Mardenshamma} and the resilience of these cooperative games to the actions of adversarial agents or communication failures are studied \cite{Brown, Shankar}.  

\subsection{Contributions}
In this paper, we study the sensor placement problem in a leader-follower networked dynamical system\footnote{Leader-follower systems have diverse applications from multi-agent formation control and vehicle platooning \cite{hao2013stability} to opinion dynamics in social networks \cite{Clarkbook}.} for improving its security against cyber-physical attacks. The sensors placement problem is posed as a zero-sum game between a detector and an attacker. The detector's strategy is to place $f$ sensors in $f$ nodes of the network  to maximize the visibility of the attacker's action. The attacker strategy is to select $f$ nodes to inject its attack signal with minimum visibility to the sensors. The objective of each player is defined as the $L_2$ gain of the system from the injected signals to the sensors' outputs. The equilibrium strategy of the detector determines the location of the sensors.      

Our main contributions can be summarized as follows:
\begin{itemize}
  \item We characterize the Nash equilibrium (NE) strategy of the attacker-detector game for $f=1$ when the underlying connectivity graph is a directed/undirected tree. It is shown that this game may not admit a NE for $f>1$, and instead Stackelberg game between the attacker and detector is analyzed when the detector acts as the game leader. A low complexity algorithm for computing the Stackelberg equilibrium of the game is proposed for both directed and undirected trees.
  
    
    \item We apply these results to discuss the security of a cooperative cruise control algorithm in vehicle platoons. More specifically, we discuss the existence and value of Nash equilibrium for the attacker-detector game on both directed and undirected platoons and the role of the position of the leading vehicle on the game value. 
    \end{itemize}

Our results indicate that the value of the attacker-detector game over a directed  tree is at most equal to that over its corresponding undirected  tree. This observation signifies the importance of two-way communication links in improving the security of networked control systems against cyber-physical attacks. Moreover, our results show that the security of a platoon, as a function of its leader location, is maximized when the leader is located at either ends of the platoon for both directed and undirected topologies.

\begin{remark}
Our analytically results are established by deriving a closed-form expression for the system $L_2$ gain of a networked control system,  via graph-theoretic interpretations of its underlying connectivity graph, for both directed and undirected trees.
\end{remark}
\subsection{Notations and Definitions}
\label{sec:not}

We use $\mathcal{G}_u=\{\mathcal{V},\mathcal{E}\}$ to denote an unweighted undirected graph where $\mathcal{V}$ is the set of vertices (or nodes) and $\mathcal{E}$ is the set of undirected edges where  $(v_i,v_j)\in \mathcal{E}$ if an only if there exists an undirected edge between $v_i$ and $v_j$. Moreover $\mathcal{G}_d=\{\mathcal{V},\mathcal{E}\}$  denotes an unweighted directed graph where $\mathcal{E}$ is the set of directed edges, i.e., $(v_i,v_j)\in \mathcal{E}$ if an only if there exists a directed edge from $v_i$ to $v_j$.  In this paper, directed graphs only have unidirectional edges, i.e., if there exists a direct edge from $v_i$ to $v_j$ in $\mathcal{G}_d$, then there is no direct edge from $v_j$ to $v_i$. Let $|\mathcal{V}|=n$ and define the adjacency matrix for $\mathcal{G}_d$, denoted by $A_{n\times n}$, to be a binary matrix  where  $A_{ij}=1$ if and only if there is an edge from $v_j$ to $v_i$ in $\mathcal{G}_d$ (the adjacency matrix will be a symmetric matrix when the graph is undirected). The {\it neighbors} of vertex $v_i \in \mathcal{V}$ in the graph $\mathcal{G}_d$ are denoted by the set $\mathcal{N}_i = \{v_j \in \mathcal{V}~|~(v_j, v_i) \in \mathcal{E}\}$. We define the in-degree (or just degree for undirected network) for node $v_i$ as $d_i=\sum_{v_j\in \mathcal{N}_i} A_{ij}$.  The Laplacian matrix of an undirected  graph is denoted by $L = D - A$, where $D = \diag (d_1, d_2, ..., d_n)$. We use $\mathbf{e}_i$ to indicate the $i$-th vector of the canonical basis.

\subsection{Organization Of The Paper}
The structure of the paper is as follows. In Section \ref{sec:defini} we introduce the mathematical formulation of the attacker-detector game in a leader-follower consensus dynamics. We then analyze  equlibriums for this game when the underlying network is an undirected tree, Section \ref{sec:undgame}, or a directed tree, Section \ref{sec:dirgame}.  Then we apply these results to a vehicle platooning scenario in Section \ref{sec:platoongame}. Section \ref{sec:conclusion} concludes the paper.

 \section{Problem Definition} \label{sec:defini}
In this section, we propose  a game-theoretic approach to the security of a leader-follower networked control system. Consider a connected network $\mathcal{G}=\{\mathcal{V},\mathcal{E}\}$ comprised  of a leader (or reference) agent, denoted by $v_{\ell}$, and a set of follower agents denoted by $F$ .  The state of each follower agent $v_j\in {F}$ evolves based on the interactions  with its neighbors as
\begin{align}
\dot{x}_{j}(t)&=\sum_{v_i\in \mathcal{N}_j}(x_i(t)-x_j(t)).
\label{eqn:partial}
\end{align}
The state of the leader (which should be tracked by the followers)  evolves with an exogenous reference signal $u(t)$ as
\begin{equation}
{x}_{\ell}(t) = u(t).
\label{eqn:fully}
\end{equation}
If the graph is connected, the  states of the follower agents will track the reference signal $u(t)$ \cite{ArxiveRobutness}. We assume without loss of generality that the leader agent is placed last in the ordering of the agents. The updating rule of each agent is prone to an intrusion (or attack).\footnote{We assume that the leader is not affected by the attacks.} More particularly, there exists an attacker which chooses  $f$ nodes in the network to inject the attack signals to.\footnote{The number of nodes under attack in practice is unknown and we can assume $f$ is an upper bound for the number of attacks.} Hence, if  the dynamics of follower $v_j$ is influenced by an attacker, it will be in the following form 
  \begin{equation}\label{eqn:parewftial}
\dot{x}_{j}(t)=\sum_{v_i\in \mathcal{N}_j}(x_i(t)-x_j(t))+w_j(t),
 \end{equation}
 where $w_j(t)>0$ represents the attack signal.  To detect the presence of the attackers, a defender deploys $f$ dedicated sensors (or detectors)  at $f$ specific follower nodes, denoted by $\mathcal{D}$. Thus we have 
 \begin{equation}
y_i(t)=x_i(t) \quad {\rm if} \quad v_i\in \mathcal{D}.
 \end{equation}
 where $y_i(t)$ is the output of the sensor (detector) deployed at follower $v_i$. 
Aggregating the states of all followers  into a vector $x_{F}(t) \in \mathbb{R}^{n-1}$, and aggregating the attack signals to $\boldsymbol w(t)$,  equations \eqref{eqn:partial} and \eqref{eqn:fully} along with the output measurement yield the following dynamics 
\begin{align}
\begin{bmatrix}
      \dot{\boldsymbol x}_{F}(t)          \\[0.3em]
       \dot{ x}_{\ell}(t) 
     \end{bmatrix}&=-\underbrace{\begin{bmatrix}
       L_g & L_{12}           \\[0.3em]
       \mathbf{0} & \mathbf{0}           
     \end{bmatrix}}_L\begin{bmatrix}
      {\boldsymbol x}_F(t)          \\[0.3em]
       { x}_{\ell}(t) 
     \end{bmatrix}+\begin{bmatrix}
      0          \\[0.3em]
       1
     \end{bmatrix} \dot{u}(t)+\begin{bmatrix}
      B          \\[0.3em]
       0
     \end{bmatrix}\boldsymbol w(t),\nonumber\\
     \boldsymbol y(t)&=C\boldsymbol x_F(t),
\label{eqn:mat}
\end{align}
where $L_{g}$ is called the grounded Laplacian matrix (formed by removing the row and the column corresponding to the leader), the submatrix $L_{12}$ of the graph Laplacian captures the influence of the leader on the followers, $B_{n\times f}=[\textbf{e}_1, \textbf{e}_2, ..., \textbf{e}_f]$, and $C_{f\times n}=[\textbf{e}_1^T; \textbf{e}_2^T; ...; \textbf{e}_f^T]$. In  words, for matrices $B$ and $C$ which specify the actions of the attacker and the detector, respectively, there is a single 1 in the $i$-th row (column) of matrix $B$ ($C$) if the $i$-th node is under attack (has a sensor).\footnote{ Note that the action of the attacker is to choose matrix $B$ and the value of the attack signal $\boldsymbol w (t)$ is not a decision variable.} We assume that there exists at least one attack to the system, i.e., $f\geq 1$. When the graph $\mathcal{G}$ is connected, $L_g$ is nonsingular and $L_g^{-1}$ is nonnegative elementwise \cite{PiraniSundaramArxiv}.  An example of the dynamics in \eqref{eqn:mat} is shown in Fig.~\ref{fig:sstdnj}. 
\begin{figure}[t!]
\centering
\includegraphics[scale=.35]{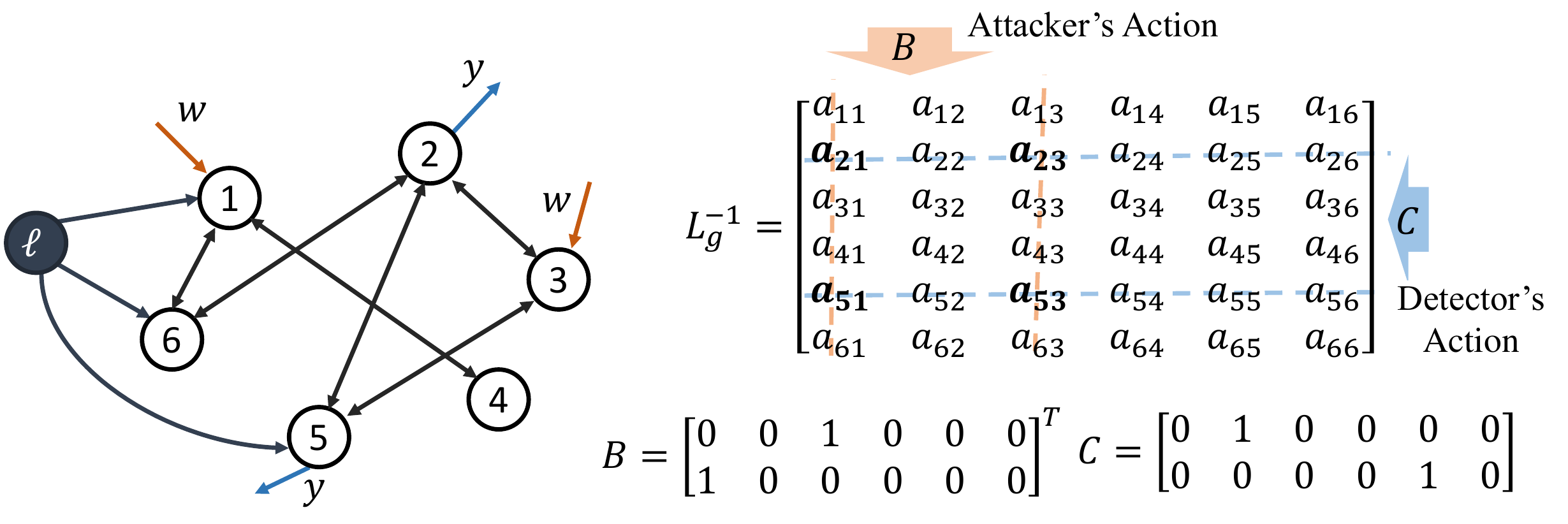}
\caption{An Example of an attacker-detector game with $f=2$.}
\label{fig:sstdnj}
\end{figure}
In this example, a $2\times 2$ submatrix is chosen by the attacker  and the detector from $L_g^{-1}$ which is shown in bold.  Based on \eqref{eqn:mat}, the dynamics of the follower agents are given by 
\begin{align}
\dot{\boldsymbol x}_{F}(t) &= -{L}_g\boldsymbol x_{F}(t) + L_{12} u(t)+B\boldsymbol w(t),
\nonumber\\
 \boldsymbol y(t)&=C\boldsymbol x_F(t).
\label{eqn:partial4}
\end{align}
 The following theorem characterizes the system $L_2$ gain from the attack signal to the output measurement of \eqref{eqn:partial4}.
 \begin{theorem}[\cite{ArxiveRobutness}]
 The system $L_2$ gain from the attack signal to the output measurement of \eqref{eqn:partial4} is given by 
 \begin{equation}\label{eqn:l2gain}
   \sup_{||\boldsymbol w||_2\neq 0} \frac{||\boldsymbol y||_2}{||\boldsymbol w||_2}=\sigma_{\max}(G(0))=  \sigma_{\max}(CL_g^{-1}B)
 \end{equation}
 where  $\sigma_{\max}$ is the maximum singular value of matrix $G(0)$ and the $L_2$ norm of signal $\boldsymbol u$ is $||\boldsymbol u||_2^2\triangleq \int_0^{\infty}\boldsymbol u^T\boldsymbol udt$.
 \label{thm:metzlertheroem}
 \end{theorem}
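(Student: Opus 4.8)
The plan is to read \eqref{eqn:l2gain} as the statement that the $\mathcal{H}_\infty$ norm of the transfer matrix from $\boldsymbol w$ to $\boldsymbol y$ coincides with the maximum singular value of its DC gain, and to exploit the fact that \eqref{eqn:partial4} is an internally \emph{positive} system. First I would write the transfer matrix from $\boldsymbol w$ to $\boldsymbol y$ (setting the reference input to zero) as $G(s)=C(sI+L_g)^{-1}B$, so that $G(0)=CL_g^{-1}B$, and recall the standard fact that for the stable system \eqref{eqn:partial4} the $L_2$ gain equals $\|G\|_\infty=\sup_{\omega\in\real}\sigma_{\max}(G(j\omega))$. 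Stability is available because, for a connected graph, $L_g$ is nonsingular with eigenvalues in the open right half-plane, so $-L_g$ is Hurwitz and $G$ is proper and stable.

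The crucial structural observation is that $-L_g$ is a Metzler matrix, since its off-diagonal entries are nonnegative; hence $e^{-L_g t}\geq 0$ entrywise for every $t\geq 0$. As $B$ and $C$ are nonnegative selection matrices, the impulse response $h(t)=Ce^{-L_g t}B$ is nonnegative entrywise, which is consistent with the stated nonnegativity of $L_g^{-1}$ through $G(0)=\int_0^\infty h(t)\,dt=CL_g^{-1}B\geq 0$. From $h(t)\geq 0$ I would then derive the entrywise frequency-domain bound
\begin{equation*}
|G(j\omega)_{k\ell}|=\Big|\int_0^\infty h_{k\ell}(t)e^{-j\omega t}\,dt\Big|\leq \int_0^\infty h_{k\ell}(t)\,dt=G(0)_{k\ell},
\end{equation*}
valid for every $\omega$ and every index pair $(k,\ell)$.

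The key remaining step, which I expect to be the main obstacle, is to convert this entrywise domination into a domination of maximum singular values, i.e. to show that $|M|\leq N$ entrywise with $N\geq 0$ implies $\sigma_{\max}(M)\leq\sigma_{\max}(N)$. I would argue this through the spectral radius: from $|M_{k\ell}|\leq N_{k\ell}$ one obtains $|(M^{*}M)_{k\ell}|\leq (N^{T}N)_{k\ell}$ entrywise, and then invoke the Perron--Frobenius monotonicity of the spectral radius for nonnegative matrices, together with the general inequality $\rho(P)\leq\rho(|P|)$, to conclude $\sigma_{\max}(M)^2=\rho(M^{*}M)\leq\rho(N^{T}N)=\sigma_{\max}(N)^2$. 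Applying this with $M=G(j\omega)$ and $N=G(0)$ gives $\sigma_{\max}(G(j\omega))\leq\sigma_{\max}(G(0))$ for all $\omega$, while equality holds trivially at $\omega=0$; therefore the supremum defining $\|G\|_\infty$ is attained at the origin and equals $\sigma_{\max}(CL_g^{-1}B)$, which is the claimed expression.
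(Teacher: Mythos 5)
The paper itself gives no proof of this theorem; it is imported verbatim from the cited reference, so there is no internal argument to compare against. Your proposal is a correct and complete self-contained proof, and it is precisely the standard positive-systems argument that the cited result rests on (the label \texttt{thm:metzlertheroem} itself points to the Metzler property as the engine). All the steps check out: $-L_g$ is Metzler because the grounded Laplacian has nonpositive off-diagonal entries, and it is Hurwitz because $L_g$ is a nonsingular M-matrix for a connected graph, so the impulse response $Ce^{-L_g t}B$ is nonnegative and the entrywise bound $|G(j\omega)_{k\ell}|\leq G(0)_{k\ell}$ follows from the triangle inequality for the integral. Your reduction of entrywise domination to singular-value domination via $\sigma_{\max}(M)^2=\rho(M^{*}M)\leq\rho(|M|^{T}|M|)\leq\rho(N^{T}N)$ is valid (one can also get it in one line from $|y^{*}Mx|\leq |y|^{T}|M|\,|x|\leq |y|^{T}N|x|$), and the identification of the $L_2$ gain with $\sup_{\omega}\sigma_{\max}(G(j\omega))$ for a stable strictly proper system is standard. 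The only presentational point worth making explicit is that the claim concerns the channel from $\boldsymbol w$ to $\boldsymbol y$ with the reference input $u$ set to zero, which you do state; with that, nothing is missing.
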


 Based on Theorem \ref{thm:metzlertheroem}, the attacker-detector game is defined as follows: 

\begin{tcolorbox}
\textbf{Attacker-Detector Game:} We model the interaction between the attacker and the detector as a zero-sum security game. In this game, the attacker's decision is the location of each attack signal, i.e., the matrix $B$ and the detector's decision is the location of sensors, i.e., matrix $C$. The attacker's objective is to reduce the visibility of the attack signal at the output by minimizing the  system $L_2$ gain \eqref{eqn:l2gain} whereas the detector's objective is to increase the visibility of the attack signal at the output by maximizing the $L_2$ gain. 
\end{tcolorbox}

Based on the definition above, the attacker-detector game is a matrix game. For the case  $f=1$, the well-known matrix game is formed with the payoff matrix equal to $[L_g^{-1}]_{ij}\geq 0$. When $f>1$, the payoff will be the largest singular value of the nonnegative matrix $CL_g^{-1}B$. 

\begin{remark}
 The reason of choosing $L_2$ gain \eqref{eqn:l2gain} as the game payoff is that the attacker is willing to be as stealthy as possible by minimizing the largest system norm (worst case gain from its perspective) over all frequencies. Having this attitude from the attacker, the detector tries to maximize this payoff. 
 \end{remark}

Next lemma states a property of the non-negative matrices which is helpful in the equilibrium analysis of the attacker-detector game. 

\begin{lemma}[\cite{vanmeighem}]\label{lem:vanmeigh}
The largest singular value of a nonnegative matrix $M$  is a non-decreasing function of its entries. Moreover, if $M$ is  irreducible, its singular value is strictly increasing with its entries. 
\end{lemma}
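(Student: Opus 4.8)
The plan is to derive both claims from the variational characterization of the largest singular value,
\[
\sigma_{\max}(M) \;=\; \max_{\|x\|_2 = \|y\|_2 = 1} y^{T} M x ,
\]
and to avoid re-optimizing by evaluating this bilinear form at a fixed optimizer associated with the smaller matrix. The one preliminary fact I need is that, because $M$ is entrywise nonnegative, the maximizing pair $(x,y)$ may be taken entrywise nonnegative: for any unit vectors one has $y^{T} M x \le \sum_{i,j} |y_i|\,M_{ij}\,|x_j| = |y|^{T} M |x|$, and passing to the entrywise absolute values preserves both norms. So the maximum is attained at some $(x^{*},y^{*})$ with $x^{*},y^{*}\ge 0$.

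First I would prove the non-decreasing part. Let $M'$ dominate $M$ entrywise, i.e.\ $M'_{ij}\ge M_{ij}$ for all $i,j$ (this is what ``increasing the entries'' means), and let $(x^{*},y^{*})$ be a nonnegative optimizing pair for $M$. Then
\[
\sigma_{\max}(M) \;=\; (y^{*})^{T} M x^{*} \;\le\; (y^{*})^{T} M' x^{*} \;\le\; \sigma_{\max}(M'),
\]
where the first inequality uses $x^{*},y^{*}\ge 0$ together with $M'-M\ge 0$, and the second uses that $\sigma_{\max}(M')$ is the maximum of the same form over all unit vectors. This already gives monotonicity in each individual entry.

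For the strict statement I would exploit that the same computation yields strict inequality as soon as the two relevant optimizer components are positive: if $M'$ differs from $M$ only by enlarging the single entry $(i,j)$, then
\[
\sigma_{\max}(M') \;\ge\; (y^{*})^{T} M' x^{*} \;=\; \sigma_{\max}(M) + \big(M'_{ij}-M_{ij}\big)\, y^{*}_{i}\, x^{*}_{j},
\]
so it suffices that $x^{*}_{j}>0$ and $y^{*}_{i}>0$. Here I would invoke the Perron--Frobenius theorem through the symmetric nonnegative dilation
\[
\tilde M \;=\; \begin{bmatrix} 0 & M \\ M^{T} & 0 \end{bmatrix},
\]
whose largest eigenvalue equals $\sigma_{\max}(M)$ and whose top eigenvector stacks the optimal $y^{*}$ and $x^{*}$; when $\tilde M$ is irreducible its Perron eigenvector is strictly positive, forcing $x^{*},y^{*}>0$ componentwise and hence the strict increase.

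The main obstacle is precisely this last positivity step. The non-strict bound is essentially immediate, but the strict version is delicate because the top singular value may be degenerate, so there need not be a \emph{unique} positive singular-vector pair, and—crucially—irreducibility of $M$ does \emph{not} by itself make $\tilde M$ irreducible (its nonzero pattern induces a bipartite graph that can be disconnected, e.g.\ for a cyclic permutation matrix). What one really needs is that for the perturbed entry $(i,j)$ there exists \emph{some} optimal pair with $x^{*}_{j}>0$ and $y^{*}_{i}>0$, and this is guaranteed once $\tilde M$ is irreducible. I would therefore isolate the verification that $\tilde M$ inherits irreducibility as the technical heart of the argument; it holds in particular whenever $M$ has full support, which is exactly the situation for the strictly positive submatrices $CL_g^{-1}B$ arising here, since $L_g^{-1}$ is elementwise positive for a connected graph.
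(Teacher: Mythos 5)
The paper offers no proof of this lemma at all---it is imported as a citation from \cite{vanmeighem}---so your attempt can only be judged on its own merits. Your treatment of the non-decreasing part is correct and complete: reducing to a nonnegative maximizing pair of the bilinear form $y^TMx$ and then chaining $\sigma_{\max}(M)=(y^*)^TM x^*\le (y^*)^TM'x^*\le\sigma_{\max}(M')$ is exactly the right argument, and it needs nothing beyond nonnegativity of $M$.

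The strict part is where the genuine gap lies, and the situation is worse than the ``technical verification'' you defer: the claim is in fact \emph{false} for singular values under mere irreducibility, so the step you isolate cannot be closed. Take $M=\left(\begin{smallmatrix}0&2\\1&0\end{smallmatrix}\right)$, which is nonnegative and irreducible, with $M^TM=\diag(1,4)$ and hence $\sigma_{\max}(M)=2$; increasing the $(2,1)$ entry to $1+\epsilon$ for any $0<\epsilon\le 1$ gives $M'^TM'=\diag\!\left((1+\epsilon)^2,4\right)$ and $\sigma_{\max}(M')=2=\sigma_{\max}(M)$. So there is provably no optimal pair with $y^*_2x^*_1>0$ here, and no amount of care with degenerate top singular subspaces will produce one. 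Strict monotonicity under irreducibility is a Perron--Frobenius statement about the largest \emph{eigenvalue} of a nonnegative matrix; it does not transfer to the largest singular value because irreducibility of $M$ controls the directed graph of $M$, not the bipartite graph of $\tilde M$ (equivalently, not the connectivity pattern of $M^TM$), which is precisely the obstruction you noticed. Your fallback hypothesis---full support of $M$, so that $\tilde M$ is irreducible and the Perron vector of $\tilde M$ is strictly positive---does repair the strict claim, and under that stronger hypothesis your perturbation bound $\sigma_{\max}(M')\ge\sigma_{\max}(M)+(M'_{ij}-M_{ij})y^*_ix^*_j$ finishes the proof; but that hypothesis is strictly stronger than irreducibility, so what you prove is a weaker lemma than the one stated. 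Finally, your closing justification that full support ``is exactly the situation'' for the matrices $CL_g^{-1}B$ in this paper is incorrect: the paper only guarantees that $L_g^{-1}$ is nonnegative, and it explicitly relies on its zero entries elsewhere (the zero blocks when the leader is a cut vertex in Theorem \ref{thm:ssodjvn}(ii), and the binary lower-triangular structure in Lemma \ref{lem:dirtreehinf}), so the selected submatrices need not be entrywise positive.
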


 \section{Equilibrium Analysis of the Attacker-Detector Game: Undirected Trees} \label{sec:undgame}
 In this section, we analyze the equilibrium of the attacker-detector game on undirected trees. We first provide an explicit characterization of $L_{g}^{-1}$, for undirected trees,  in terms of the properties of its underlying connectivity graph. This result is helpful in our equilibrium analysis and allows us to investigate the game value. The proof of this result is presented in Appendix \ref{sec:lem2}. 

\begin{lemma}\label{lem:treehinf}
Suppose that $\mathcal{G}_u$ is an undirected tree and  let $\mathcal{P}_{i\ell}$ be the set of nodes involved in the (unique) path from the leader node $v_{\ell}$ to $v_i$ (including $v_i$). Then  we have
\begin{equation}
   [L_g^{-1}]_{ij}= |\mathcal{P}_{i\ell} \cap \mathcal{P}_{j\ell}|.
\end{equation}
\end{lemma}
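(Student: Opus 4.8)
The plan is to root the tree at the leader $v_\ell$ and reduce the entire claim to a single factorization of the grounded Laplacian. Indexing everything by the $n-1$ followers, I would introduce the \emph{ancestor matrix} $R\in\{0,1\}^{(n-1)\times(n-1)}$ defined by $R_{ik}=1$ iff $v_k$ lies on the path $\mathcal{P}_{i\ell}$ (equivalently, $v_k$ equals $v_i$ or is a proper ancestor of $v_i$ in the rooted tree). Since $\mathcal{P}_{i\ell}\cap\mathcal{P}_{j\ell}$ is exactly the set of common such follower nodes, counting them gives
\begin{equation}
|\mathcal{P}_{i\ell}\cap\mathcal{P}_{j\ell}| = \sum_{k} R_{ik}R_{jk} = (RR^T)_{ij},
\end{equation}
so the theorem is equivalent to the statement $L_g^{-1}=RR^T$.

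Next I would compute $R^{-1}$ explicitly. Let $P$ be the follower parent-pointer matrix, with $P_{ik}=1$ iff $v_k=\mathrm{parent}(v_i)$ and $v_k\in F$ (the $i$-th row is zero when the parent of $v_i$ is the leader). The defining recursion of the path, $R_{ik}=[k=i]+R_{\mathrm{parent}(i),k}$ (with the convention $R_{\ell,k}=0$), rearranges to $R_{ik}-R_{\mathrm{parent}(i),k}=[k=i]$, which is precisely $(I-P)R=I$; hence $R$ is invertible with $R^{-1}=I-P$. (Ordering followers by increasing depth makes $P$ strictly lower triangular, so $I-P$ is manifestly invertible.) Consequently $RR^T=(I-P)^{-1}(I-P^T)^{-1}$, and the theorem reduces to the clean identity
\begin{equation}
L_g=(I-P^T)(I-P).
\end{equation}

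The final and main step is to verify this identity entrywise, using $(I-P^T)(I-P)=I-P-P^T+P^TP$. Because each follower has a unique parent, $P^TP$ is diagonal, with $(P^TP)_{ii}$ equal to the number of children $c_i$ of $v_i$; thus the diagonal of the product is $1+c_i$, while the diagonal of $L_g$ is the degree $d_i$. The key structural fact is that every follower has exactly one parent edge, so $d_i=c_i+1$, which matches the diagonals. Off the diagonal, $P^TP$ contributes nothing, and $-P_{ik}-(P^T)_{ik}$ equals $-1$ exactly when $v_k$ is the parent or a child of $v_i$, i.e. when $v_i\sim v_k$, so it equals $-A_{ik}=(L_g)_{ik}$. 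This establishes $L_g=(I-P^T)(I-P)$ and hence $L_g^{-1}=RR^T$, proving the claim.

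I expect the crux to be recognizing that the ``$+1$'' produced by $P^TP$ together with the child count reconstitutes the full degree $d_i$, and that adjacency in a tree is exactly the parent-or-child relation; everything else is bookkeeping. As a cross-check, or as an alternative route that avoids the factorization, one could verify $L_g M=I$ directly by fixing $i,j$ and splitting the neighbor sum $\sum_{k\sim i}M_{kj}$ according to whether $v_j$ lies in the subtree of $v_i$ (using the interpretation $M_{ij}=\mathrm{depth}(\mathrm{LCA}(v_i,v_j))$), but this requires a case analysis that the factorization approach sidesteps.
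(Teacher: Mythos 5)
Your proof is correct, but it takes a genuinely different route from the paper. The paper's argument is combinatorial: it invokes the all-minors matrix-tree theorem, writing $[L_g^{-1}]_{ij}$ as the second-order cofactor ${\rm cof}(L)_{ij,\ell\ell}$ divided by $\det(L_g)$, and then counts the 2-trees $t_{ij,\ell\ell}$ of the tree (there is exactly one per edge common to the two leader-rooted paths), relying on cited results for the cofactor interpretations. Your argument is instead a self-contained linear-algebraic factorization: you identify $L_g=(I-P^T)(I-P)$ with $P$ the parent-pointer matrix, invert to get $L_g^{-1}=RR^T$ with $R=(I-P)^{-1}$ the ancestor-indicator matrix, and read off the entries as the common-ancestor count; each step (the recursion $(I-P)R=I$, the diagonal identity $d_i=c_i+1$, and adjacency being the parent-or-child relation) checks out, including the boundary case where a follower's parent is the leader. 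What your route buys is that it needs no external citations and, perhaps more interestingly, it makes explicit a structural fact the paper only uses later: your $I-P$ is exactly the grounded Laplacian $L_{g_d}$ of the leader-rooted \emph{directed} version of the tree, so your factorization is precisely the identity $L_{g_u}=L_{g_d}^TL_{g_d}$ that the paper cites separately in the proof of Lemma \ref{lem:dirtreehinf}, and $R=(I-P)^{-1}$ is exactly the reachability matrix asserted there; your proof therefore unifies Lemmas \ref{lem:treehinf} and \ref{lem:dirtreehinf} in one computation. What the paper's route buys is generality: the cofactor/2-tree argument extends directly to weighted graphs and to non-tree topologies (where $[L_g^{-1}]_{ij}$ becomes a ratio of 2-tree counts to spanning-tree counts), whereas your parent-pointer factorization is specific to trees.
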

According to this lemma, the $(i,j)$th element of $L_g^{-1}$ is equal to the number of common edges between the path from the leader to the node $v_i$ and the path  from the leader to the node $v_j$. As an example, $|\mathcal{P}_{3\ell} \cap \mathcal{P}_{6\ell}|=1$ for nodes $3$ and $6$ and $|\mathcal{P}_{3\ell} \cap \mathcal{P}_{4\ell}|=2$ for nodes $3$ and $4$ in Fig. \ref{fig:1} (a).

\subsection{Equilibrium Analysis: $f=1$}

In the single attacker-detector case, i.e., $B=\mathbf{e}_i$ and $C=\mathbf{e}_j^T$ for some $1 \leq i,j\leq n$, the system $L_2$ gain will become
\begin{align}\label{eqn:soigoi}
 \mathbf{e}_j^TL_{g}^{-1}\mathbf{e}_i=[L_{g}^{-1}]_{ij},
\end{align}
where $[L_{g}^{-1}]_{ij}$ is the $ij$-th  element of $L_{g}^{-1}$. 

The following theorem establishes the existence of NE for the attacker-detector game with $f=1$. 
\begin{theorem}\label{thm:ssodjvn}
Let $\mathcal{G}_u$ be an undirected  tree and $v_{\ell}$ be the leader node. Then, for $f=1$, 
\begin{itemize}
    \item[(i)] The attacker-detector game admits at least one NE if $v_{\ell}$ is not a cut vertex and the game value is 1 for all NE in this case. 
    
    \item[(ii)] The game does not admit any NE if $v_{\ell}$ is a cut vertex.
\end{itemize}
\end{theorem}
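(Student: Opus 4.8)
The plan is to reduce the statement to a question about saddle points of the payoff matrix $P$ with entries $P_{ij}=[L_g^{-1}]_{ij}$, and then to read off the saddle-point structure directly from the combinatorial description of $P$ given by Lemma~\ref{lem:treehinf}. Since the $f=1$ attacker-detector game is a finite zero-sum matrix game, a mixed equilibrium always exists, so the only content of the theorem is whether a \emph{pure} NE exists; and a pure NE is exactly a saddle point of $P$, with the detector maximizing over columns $j$ and the attacker minimizing over rows $i$. I would first record the saddle criterion: a pure NE exists if and only if $\min_i\max_j P_{ij}=\max_j\min_i P_{ij}$, and the common value is then the game value. By Lemma~\ref{lem:treehinf}, $P$ is a nonnegative integer matrix, and the single structural fact I would isolate is the dichotomy $P_{ij}\ge 1$ when the leader-paths to $v_i$ and $v_j$ share their first step (i.e. $v_i,v_j$ lie in the same connected component of $\mathcal{G}_u\setminus\{v_\ell\}$), and $P_{ij}=0$ when $v_i,v_j$ lie in different such components, since then $\mathcal{P}_{i\ell}\cap\mathcal{P}_{j\ell}=\emptyset$.

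For part (i), I would use that $v_\ell$ not being a cut vertex of a tree means $v_\ell$ is a leaf, hence $\mathcal{G}_u\setminus\{v_\ell\}$ is connected and there is a unique neighbor $v_r$ through which every leader-path passes. This forces $v_r\in\mathcal{P}_{j\ell}$ for all $j$, so the row and column indexed by $v_r$ are identically $1$ ($P_{rj}=P_{ir}=1$), while simultaneously every entry of $P$ is at least $1$. I would then verify directly that $(v_r,v_r)$ is a saddle point: the attacker playing $v_r$ caps the payoff at $1$ regardless of the detector, and $1$ is the smallest possible entry, so $\min_i\max_j P_{ij}=\max_j\min_i P_{ij}=1$. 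Because the value of a zero-sum game is unique across all equilibria, every NE (pure or mixed) then necessarily has value $1$.

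For part (ii), I would use that $v_\ell$ being a cut vertex of a tree means $\deg(v_\ell)\ge 2$, so $\mathcal{G}_u\setminus\{v_\ell\}$ splits into $k\ge 2$ branches and, by the dichotomy above, $P$ is block diagonal with all cross-branch entries equal to $0$ and every diagonal block having entries $\ge 1$. I would then compute the two values separately. For any column $j$, the attacker can respond from a different branch, giving $\min_i P_{ij}=0$, hence $\max_j\min_i P_{ij}=0$. For any row $i$, the detector can respond within the same branch, e.g. $j=i$, giving $\max_j P_{ij}\ge P_{ii}\ge 1$, hence $\min_i\max_j P_{ij}\ge 1$. The strict gap $1>0$ between the upper and lower values precludes any saddle point, so no pure NE exists.

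The main obstacle, in my view, is not any individual computation but correctly pinning down the meaning of ``NE'' (pure versus mixed, which is what makes non-existence possible at all) and establishing the exact zero/positive pattern of $P$. The crux is the observation that $[L_g^{-1}]_{ij}=0$ precisely when $v_i$ and $v_j$ are separated by the leader; this is what translates the graph-theoretic cut-vertex hypothesis into the block-diagonal structure driving both cases. Once that translation is in place, the upper/lower value computations and the identification of the saddle point at $v_r$ are routine.
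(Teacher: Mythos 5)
Your proposal is correct and follows essentially the same route as the paper: part (i) exhibits the saddle at the leader's unique neighbor using the all-ones column from Lemma~\ref{lem:treehinf}, and part (ii) uses the block-diagonal zero/positive structure of $L_g^{-1}$ when the leader is a cut vertex to show the saddle inequalities cannot hold. Your write-up is somewhat more careful than the paper's (explicitly equating ``not a cut vertex'' with ``leaf,'' allowing more than two components, distinguishing pure from mixed equilibria, and invoking uniqueness of the zero-sum value to cover ``all NE''), but the underlying argument is the same.
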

\begin{proof}
For part (i), the NE belongs to the case where the attacker (the minimizer) chooses the column corresponding to the leader's neighbor.  According to Lemma \ref{lem:treehinf} since all elements of this column are all 1, then, regardless of the actions of the detector, the game payoff will be  1.  Moreover, if the attacker chooses a node other than the leader's neighbor, the payoff will be at least 1.  Hence, not the attacker, nor the detector have an incentive to change their strategy.  For part (ii), if the leader is removed, the  graph will be splitted into two parts and the resulting grounded Laplacian matrix, and consequently $L_g^{-1}$, become block diagonalized. Assume that a NE exists in this case and let $(i^*,j^*)$ denote the equilibrium strategies of the attacker and detector. Thus, we should have
 \begin{equation}\label{eqn:sdgjb}
[L_g^{-1}]_{ij^*}\leq [L_g^{-1}]_{i^*j^*}\leq [L_g^{-1}]_{i^*j}
\end{equation}
for all $i\neq i^*$ and $j\neq j^*$. 
If element $[L_g^{-1}]_{i^*j^*}$ is in one of the zero blocks, as shown in Fig.~\ref{fig:1} (b), then the left inequality will be violated and if it is in one of the nonzero blocks, the right inequality will be violated. 
\end{proof}
\subsection{Equilibrium Analysis: $f>1$}

For $f>1$, the attacker-detector game deos not admit a Nash equilibrium in general as shown in the following example.
\begin{example}\label{exp:aofn}
In Fig.~\ref{fig:1} (c) for the case of $f=2$, it is clear, according to Lemma \ref{lem:vanmeigh}, that one of the choices of the attacker is node 1. Then for the second choice, both attacker and detector should choose from the blocks of all 1 or the red blocks. Thus, similar to the proof of part (ii) of Theorem \ref{thm:ssodjvn}, there would be no NE for the game. In Fig.~\ref{fig:1} (d) there exists a NE for $f=2$.  
\end{example}

\begin{figure}[t!]
\centering
\includegraphics[scale=.3]{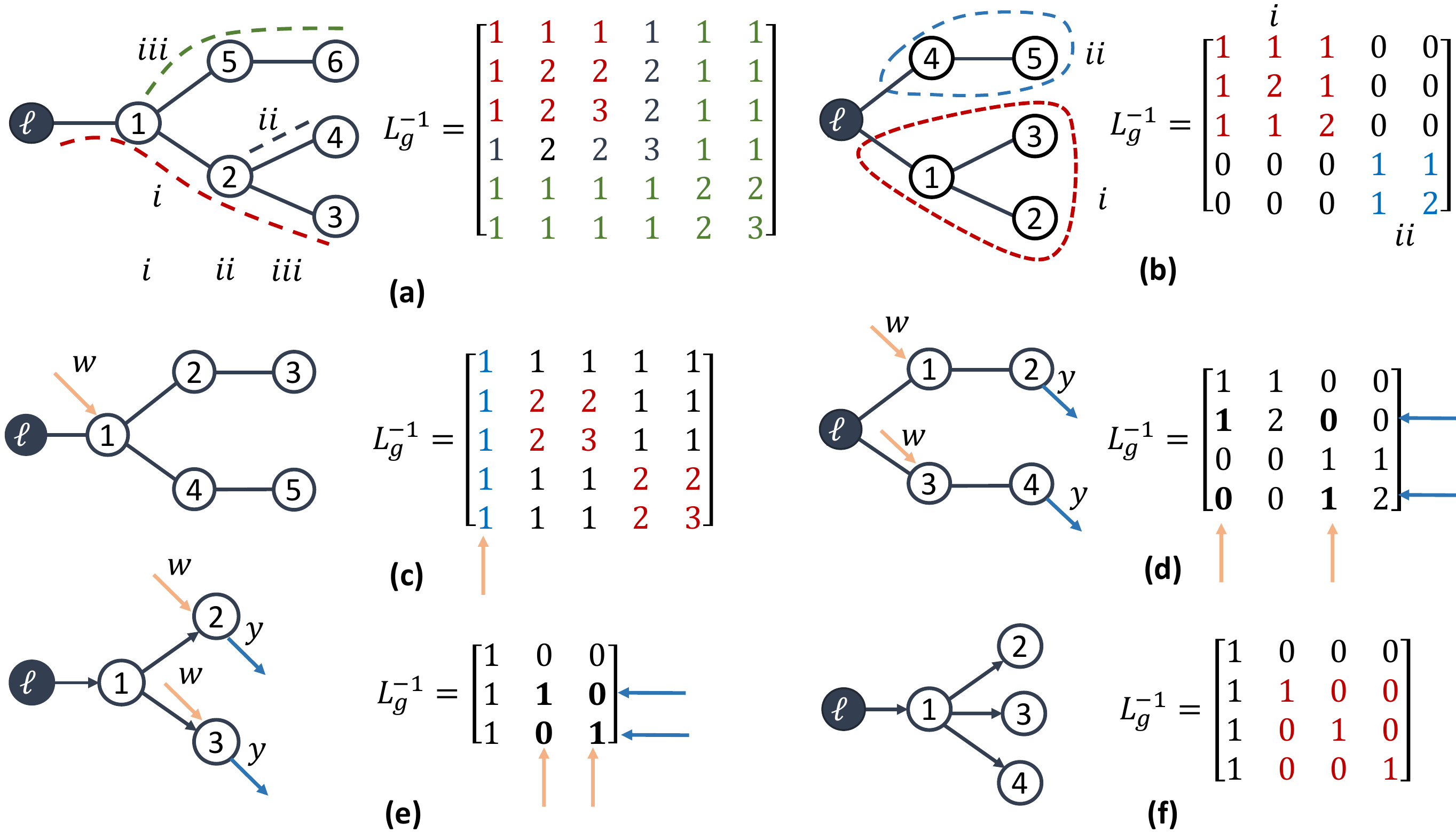}
\caption{(a) An undirected tree with its three paths to leader $v_{\ell}$, (b) An undirected tree where $v_{\ell}$ is a cut vertex, (c) An undirected tree which does not admit NE for $f=2$, (d) n undirected tree with a NE for $f=2$, (e) A directed tree with NE for $f=2$, (f) A directed tree which does not admit NE for $f=2$.}
\label{fig:1}
\end{figure}

\subsection{Stackelberg Game Approach $f>1$}

According to the Example \ref{exp:aofn}, a NE may not exist for general trees. More formally, the following equality does not hold in general
$$\min_B\max_C \sigma_{\max}(CL_g^{-1}B) = \max_C\min_B \sigma_{\max}(CL_g^{-1}B).$$
In this case, we study the Stackelberg equilibrium strategy of the attacker-detector game when the detector acts as the game leader and the attacker acts as the follower. In the Stackelberg game formulation, the leader solves the following optimization problem 
\begin{equation}\label{eqn:stackd}
J^*(C)=\max_C\sigma_{\max}\left(CL_g^{-1}B^*(C)\right).
\end{equation}
where $B^*(C)$ is the best response of the attacker when the strategy of the detector is $C$, i.e., $B^*(C)$ is the solution of the following optimization problem 
\begin{equation}\label{eqn:stack}
B^*(C)=\arg\min_B\sigma_{\max}\left(CL_g^{-1}B\right).
\end{equation}
In particular, for a given strategy of the detector, i.e., $C$, the attacker finds its best response strategy to the detector's decision, which is given by $\min_B CL_g^{-1}B$. Then, the detector optimizes its decision based on all possible best response strategies of the attacker. Unlike the NE,  a Stackelberg game always admits an equilibrium strategy.

 In general, the computation complexity of solving \eqref{eqn:stackd} is $O\left(\binom{n}{f}^2\right)$. That is, the attacker needs to solve \eqref{eqn:stack} for all possible choice of $f$ victim nodes. Then, the detector selects the sensor placement strategy which maximizes \eqref{eqn:stack}. However, based on properties of the grounded Laplacian matrix, we propose an algorithm for finding the Stackelberg equilibrium with much less computational cost.   This algorithm, in a nutshell, is that both attacker and detector identify all $m$ leader-rooted paths\footnote{A leader-rooted path in a tree is a unique path starting from the leader and ends at a node with degree 1.} in $\mathcal{G}$. Then for each partition of $f$ into $m$ nonnegative values $f_1, f_2, ..., f_m$, the detector (attacker) places $f_i$ sensors (attacks) to $f_i$ farthest (closest) nodes to the leader in the $i$-th leader rooted path (i.e., there is no computational cost for the placement of attacks and detectors for a given partitioning).
 The proposed algorithm for solving the Stackelberg game is shown in Algorithm 1.  As it will be shown in Theorem \ref{thm:costfew}, the complexity of this algorithm does not scale with the network size. 
\begin{algorithm}\label{Alg: A1}
\caption{Stackelberg Attacker-Detector Game on Undirected Trees.}
\texttt{// Inputs:}  $\mathcal{G}(\mathcal{V},\mathcal{E})$, $f$
\label{alg: harrary}
\begin{algorithmic}
\State $J^*\leftarrow \mathbf{0}_{\mathcal{S}_{f,m}}$, where $\mathcal{S}_{f,m}$ is the number of solutions of \eqref{eqn:lkdn}.
\For{$i=1:\mathcal{S}_{f,m}$}
\For{$j=1:\mathcal{S}_{f,m}$}

$B^*(C_i)=\arg\min_{B_j}\sigma_{\max}(C_iL_g^{-1}B_j)$
\EndFor

$J^*_i=\sigma_{\max}\left(C_iL_g^{-1}B^*(C_i)\right)$
\EndFor

\texttt{// Output:}  $C^*=\arg\max_{C_i}J^*_i$
\end{algorithmic}
\end{algorithm}	

\begin{theorem}\label{thm:costfew}
Consider the Stackelberg attacker-detector game, with the detector as the game leader, over the connected tree $\mathcal{G}_u$ with leader node $v_{\ell}$ and $m$ leader rooted paths.  Then, Algorithm 1 finds the Stackelberg equilibrium of the game. Moreover, its  computational complexity is $O\left(\mathcal{S}_{f,m}^2\right)$, where $\mathcal{S}_{f,m}$ is the number of constrained partitions of $f$ into $m$ nonnegative integers, i.e., the integer solutions of
\begin{equation}\label{eqn:lkdn}
f=\sum_{i=1}^mf_i, \quad 0\leq f_i\leq \ell_i,
\end{equation}
where $\ell_i$ is the length of the  $i$-th leader rooted path.
\end{theorem}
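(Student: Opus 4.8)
The plan is to collapse the Stackelberg search over all $\binom{n}{f}$ node subsets down to a search over the $\mathcal{S}_{f,m}$ integer partitions described by \eqref{eqn:lkdn}, and then to count the resulting work. The backbone of the reduction is the closed form $[L_g^{-1}]_{ij}=|\mathcal{P}_{i\ell}\cap\mathcal{P}_{j\ell}|$ from Lemma \ref{lem:treehinf} together with the singular-value monotonicity of Lemma \ref{lem:vanmeigh}. First I would record the key domination fact: if $v_{i'}$ lies on the path $\mathcal{P}_{i\ell}$ from the leader to $v_i$ (i.e.\ $v_{i'}$ is an ancestor of $v_i$ in the leader-rooted tree), then $\mathcal{P}_{i'\ell}\subseteq\mathcal{P}_{i\ell}$, so that $[L_g^{-1}]_{ij}\ge[L_g^{-1}]_{i'j}$ for every node $v_j$. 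In words, the row of $L_g^{-1}$ attached to a node farther from the leader entrywise dominates the row of any ancestor on the same branch; since $L_g^{-1}$ is symmetric for an undirected tree, the same statement holds columnwise.

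Next I would establish the two players' reductions. Fix how many sensors the detector assigns to each leader-rooted path, say $f_i$ on path $i$ with $\sum_i f_i=f$ and $0\le f_i\le\ell_i$; this is exactly a solution of \eqref{eqn:lkdn}. Using the row-domination fact, pushing any sensor on path $i$ outward to the farthest unoccupied node of that path can only increase, entrywise, the corresponding row of the nonnegative submatrix $CL_g^{-1}B$, so by Lemma \ref{lem:vanmeigh} the value $\sigma_{\max}(CL_g^{-1}B)$ does not decrease. Hence, for each allocation, the detector's optimal placement is the ``$f_i$ farthest nodes'' rule, and the detector need only range over the $\mathcal{S}_{f,m}$ allocations. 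The symmetric column-domination argument shows that, for any fixed detector strategy, the attacker minimizes $\sigma_{\max}$ by placing its $f_i$ attacks on the $f_i$ closest nodes of each path, so the inner minimization \eqref{eqn:stack} reduces to the same $\mathcal{S}_{f,m}$ allocations.

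I would then assemble these into the Stackelberg claim: the outer maximization \eqref{eqn:stackd} ranges the detector over its $\mathcal{S}_{f,m}$ farthest-node strategies, and for each the best response \eqref{eqn:stack} ranges the attacker over its $\mathcal{S}_{f,m}$ closest-node strategies, which is precisely the double loop of Algorithm 1; correctness holds because each reduction shows that every full-space strategy is dominated by a partition-based one, so no optimal value is lost. For the complexity, the outer loop contributes a factor $\mathcal{S}_{f,m}$ and the attacker's best response another factor $\mathcal{S}_{f,m}$; each iteration only reads an $f\times f$ submatrix (whose entries are path-intersection cardinalities, computable in time depending on $f$ but not on $n$) and performs one $f\times f$ singular-value computation. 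The product yields the claimed $O(\mathcal{S}_{f,m}^2)$ bound, whose crucial feature is that it does not scale with the network size $n$.

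The step I expect to be the main obstacle is making the detector's ``farthest nodes'' reduction airtight when the leader-rooted paths overlap near the leader: because a node close to the leader belongs to several leader-rooted paths, occupying the farthest $f_i$ nodes of each path independently could demand a shared node twice or stack two sensors on one node. I would resolve this by processing allocations from the leaves inward and invoking the ancestor--descendant domination only along a single branch, so that each outward push lands on a genuinely unoccupied descendant; the remaining work is to verify that this greedy resolution still realizes the entrywise-maximal submatrix for the given allocation, after which Lemma \ref{lem:vanmeigh} closes the argument.
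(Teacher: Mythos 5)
Your proposal is correct and follows essentially the same route as the paper: both reduce the strategy spaces to the $\mathcal{S}_{f,m}$ path-allocations by combining the entrywise row/column domination implied by Lemma \ref{lem:treehinf} (descendants dominate ancestors along a branch) with the singular-value monotonicity of Lemma \ref{lem:vanmeigh}, and then count the double loop of Algorithm 1. The paper phrases the domination step as a contradiction/exchange argument on columns within each path segment and handles the shared-nodes-near-the-leader issue implicitly through its node-labeling scheme, whereas you address it explicitly, but these are bookkeeping differences rather than a different proof.
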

\begin{proof}
Without loss of generality, we label the nodes in a tree in the following form. We start labeling the nodes a leader rooted path from the leader neighbor, node 1, to a leaf, called $\ell_1$. Then we continue from another leader rooted path which has maximum sharing nodes with the previous leader rooted path can label that from $\ell_1 +1$ to the leaf called $\ell_2$. We continue labeling until all nodes are labeled and the leaf of the last leader rooted path is called $\ell_m$. For the proof, it is sufficient to show that $f_i$ attackers ($f_i$ detectors)  have to be placed in the first (last) $f_i$ columns ($f_i$  rows) of partition $i$. We prove this by contradiction for placing the attack signals and the detector case it follows the same discussion. Let's denote $C_i$ to be the set of columns from $\ell_i+1$ to $\ell_{i+1}$. By contradiction, suppose there exists at least one column $C_i^j$ of $C_i$ where $j<f_i$ which is not chosen by an attacker. Since in this case there exists another column $C_i^h$, $h>j$ which is chosen by an attacker and, as a consequence of Lemma \ref{lem:treehinf}, each elements of $C_i^j$ is smaller than  or equal to $C_i^h$, this contradicts the optimal strategy of the attacker and the proof is complete.
\end{proof}

 \section{Equilibrium Analysis of the Attacker-Detector Game: Directed Trees}\label{sec:dirgame}

In this section, we investigate the existence of equilibrium for the attacker-detector game, when the underlying network is a directed tree. We present the following assumption.
\newline
\textbf{Assumption 1}: In  directed tree $\mathcal{G}_d$  each follower $v_i$ can be reached through a directed path from  leader $v_{\ell}$.  

 Similar to Lemma \ref{lem:treehinf}, we derive a closed-from expression for the inverse of grounded Laplacian matrix $L_g^{-1}$ for the directed case. This result is presented in the next lemma and its proof is presented in Appendix \ref{sec:lem3}. 

\begin{lemma}\label{lem:dirtreehinf}
Suppose that $\mathcal{G}_d$ is a directed tree  with the leader node $v_{\ell}$ satisfying Assumption 1. Then, the entries of the matrix $L_g^{-1}$ are given by 
\begin{equation}
   [L_g^{-1}]_{ij}=
  \begin{cases}
    1       & \quad  \text{if there is a directed path from $j$ to $i$},\\
   0  & \quad \text{if there is no directed path from $j$ to $i$}.
  \end{cases}
\end{equation}
\end{lemma}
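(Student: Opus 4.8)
\textbf{Proof proposal for Lemma \ref{lem:dirtreehinf}.}

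The plan is to verify the claimed closed form by showing that the proposed matrix is the genuine inverse of $L_g$, i.e.\ that $L_g M = I$ where $M$ is the reachability matrix defined by $M_{ij}=1$ if there is a directed path from $v_j$ to $v_i$ and $M_{ij}=0$ otherwise. This is cleaner than trying to compute $L_g^{-1}$ directly, since checking a product only requires local structural facts about the directed tree. First I would exploit Assumption 1 together with the unidirectional-edge hypothesis stated in the notation section: in a directed tree where every follower is reachable from the leader by a directed path, each follower $v_i$ has exactly one in-neighbor (its unique parent $p(i)$) except that the leader's children have the leader as parent. Consequently the grounded Laplacian takes the triangular form $L_g = I - A_F$, where $A_F$ is the (nilpotent) adjacency among followers, so that row $i$ of $L_g$ reads $[L_g]_{ii}=d_i=1$ and $[L_g]_{i,p(i)}=-1$ when the parent $p(i)$ is itself a follower, with all other entries zero.

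Next I would compute the $(i,j)$ entry of $L_g M$. Expanding, $(L_g M)_{ij} = \sum_k [L_g]_{ik} M_{kj} = M_{ij} - M_{p(i),j}$ when $p(i)$ is a follower, and $(L_g M)_{ij}=M_{ij}$ when the parent of $v_i$ is the leader (so no $-1$ term survives). The key combinatorial observation is a path-prefix fact: there is a directed path from $v_j$ to $v_i$ if and only if either $i=j$, or there is a directed path from $v_j$ to the parent $p(i)$ --- because in a directed tree the unique edge entering $v_i$ comes from $p(i)$, so any path reaching $v_i$ must pass through $p(i)$ as its penultimate node. I would then do the case analysis. On the diagonal, $M_{ii}=1$ while $M_{p(i),i}=0$ (there is no path from $v_i$ back to its ancestor, since edges are unidirectional and the graph is acyclic), giving $(L_gM)_{ii}=1$. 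Off the diagonal with $i\neq j$, the prefix fact gives $M_{ij}=M_{p(i),j}$ whenever $p(i)$ is a follower, so the two terms cancel and $(L_gM)_{ij}=0$; in the remaining case where $p(i)=v_\ell$, reachability of $v_i$ from $v_j\neq v_i$ would force a path through $v_\ell$, which is impossible as the leader has no incoming follower edges, so $M_{ij}=0$ directly.

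I would close by noting that $L_g$ is nonsingular (as already asserted in the excerpt for connected graphs, and here guaranteed by the triangular structure with unit diagonal), so the identity $L_gM=I$ uniquely identifies $M=L_g^{-1}$, establishing the lemma. The main obstacle I anticipate is not the linear algebra but pinning down the structural claim that each follower has a single follower-parent contributing the lone $-1$ in its row: this rests squarely on Assumption 1 and on the unidirectional-edge convention, and the argument must handle the boundary case where $v_i$'s parent is the leader separately, since the leader's row and column were removed in forming $L_g$ and therefore no $-1$ appears in those rows. Once that local description of $L_g$ is secured, the path-prefix cancellation makes the verification essentially immediate.
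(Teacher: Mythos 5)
Your proof is correct, but it takes a genuinely different route from the paper's. The paper proves the lemma indirectly: it invokes the identity $L_{g_d}^{T}L_{g_d}=L_{g_u}$ (cited from earlier work), deduces $L_{g_d}^{-1}L_{g_d}^{-T}=L_{g_u}^{-1}$, and then checks that the reachability matrix $M$ satisfies the resulting quadratic relation $[M]_i[M]_j^{T}=|\mathcal{P}_{i\ell}\cap\mathcal{P}_{j\ell}|$ entrywise, using Lemma~\ref{lem:treehinf} for the right-hand side, before asserting that this determines $M$ uniquely. You instead verify directly that $L_g M=I$, using the local structure of the directed tree (each follower has in-degree $1$ with its unique parent as the sole in-neighbor, so $L_g=I-A_F$ with $A_F$ nilpotent) together with the path-prefix observation that a directed path from $v_j$ to $v_i$ with $i\neq j$ must enter $v_i$ through $p(i)$. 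Your approach is more elementary and self-contained: it does not rely on the external factorization result or on Lemma~\ref{lem:treehinf}, and it handles the boundary case $p(i)=v_\ell$ explicitly. It also sidesteps a real weakness of the paper's argument: the relation $MM^{T}=L_{g_u}^{-1}$ does not by itself single out $M$ (any $MQ$ with $Q$ orthogonal satisfies it), so the paper's uniqueness claim needs more justification than is given, whereas your identity $L_gM=I$ pins down $M=L_g^{-1}$ immediately once $L_g$ is nonsingular. What the paper's route buys in exchange is the structural identity linking the directed and undirected grounded Laplacians, which it reuses implicitly when comparing the directed and undirected game values in Theorem~\ref{thm:comparison}.
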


\subsection{Equilibrium Analysis: $f=1$}

The following theorem discusses the existence of NE for the attacker-detector game with dynamics \eqref{eqn:partial4} on directed trees when $f$ is equal to $1$.  

\begin{theorem}
Suppose that $\mathcal{G}_d$ is a directed tree with the leader node $v_{\ell}$ satisfying Assumption 1. Then, the attacker-detector game does not accept a NE $f=1$ except when $\mathcal{G}_d$ is a directed path.
\end{theorem}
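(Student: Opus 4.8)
The plan is to reduce the game to a pure saddle-point question on the binary payoff matrix $[L_g^{-1}]$ and then to characterize exactly when such a saddle point can exist. By Lemma~\ref{lem:dirtreehinf}, for a directed tree satisfying Assumption~1 the payoff when the attacker picks attack node $a$ and the detector picks sensor node $s$ equals $[L_g^{-1}]_{sa}\in\{0,1\}$, and it equals $1$ precisely when there is a directed path from $a$ to $s$ (equivalently, $a$ is an ancestor of $s$ in the tree rooted at $v_{\ell}$, where the case $a=s$ contributes the trivial length-zero path, so every diagonal entry is $1$). I would first record this reformulation and then argue non-existence through the standard saddle-point inequalities, exactly mirroring the structure of the proof of Theorem~\ref{thm:ssodjvn}(ii).

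For the non-path case I would argue by contradiction. Suppose a pure NE $(a^*,s^*)$ exists; then
\[
[L_g^{-1}]_{s\,a^*}\le [L_g^{-1}]_{s^*a^*}\le [L_g^{-1}]_{s^*a}\qquad\text{for all followers } a,s,
\]
where the left inequality says the detector (maximizing over the sensor node $s$) cannot improve and the right one says the attacker (minimizing over the attack node $a$) cannot improve. Since the payoff is $0/1$, the equilibrium value $[L_g^{-1}]_{s^*a^*}$ is either $0$ or $1$. If it is $0$, the left inequality forces $[L_g^{-1}]_{s\,a^*}=0$ for every $s$; taking $s=a^*$ contradicts $[L_g^{-1}]_{a^*a^*}=1$. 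If it is $1$, the right inequality forces $[L_g^{-1}]_{s^*a}=1$ for every follower $a$, i.e. every follower is an ancestor of $s^*$.

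The heart of the argument, and the step I expect to be the main obstacle, is the combinatorial claim that ``every follower is an ancestor of $s^*$'' forces $\mathcal{G}_d$ to be a directed path. Under Assumption~1 the ancestors of $s^*$ are exactly the nodes on the unique leader-rooted path terminating at $s^*$, which form a chain; if all followers lie on this chain then the follower set is totally ordered by the reachability relation, so $\mathcal{G}_d$ is the single directed path $v_{\ell}\to v_1\to\cdots\to v_{n-1}$ and $s^*$ is its terminal node. Contrapositively, whenever $\mathcal{G}_d$ is not a path (some node has out-degree $\ge 2$, equivalently there are at least two leaves) no such $s^*$ exists, so both cases above are impossible and no NE exists. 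Equivalently, one may certify this via a strict min--max gap: for any candidate sensor node $s$ there is an attack node $a$ off the path to $s$ (a child of $s$ when $s$ is internal, or a second leaf when $s$ is a leaf), whence $\min_a[L_g^{-1}]_{s\,a}=0$ for every $s$, giving $\max_s\min_a[L_g^{-1}]_{s\,a}=0$, while $\min_a\max_s[L_g^{-1}]_{s\,a}=1$ (take $s=a$).

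Finally, for the exceptional direction I would exhibit the equilibrium directly. On the directed path $v_{\ell}\to v_1\to\cdots\to v_{n-1}$ the pair $(a^*,s^*)=(v_1,v_{n-1})$ is a saddle point of value $1$: placing the sensor at the terminal node $v_{n-1}$ makes $[L_g^{-1}]_{\,v_{n-1}\,a}=1$ for every attack node $a$, so the attacker has no profitable deviation, while the column indexed by $a^*=v_1$ is already saturated at $1$, so the detector has no profitable deviation. This settles both directions and completes the proof.
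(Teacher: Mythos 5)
Your proof is correct and follows essentially the same route as the paper: reduce to the binary matrix of Lemma~\ref{lem:dirtreehinf}, impose the saddle-point inequalities \eqref{eqn:sdgjb}, rule out equilibrium value $0$ via the unit diagonal, and show that value $1$ forces every follower to be an ancestor of the sensor node, hence a directed path. You are in fact slightly more complete than the paper, since you also exhibit the equilibrium $(v_1,v_{n-1})$ in the path case and certify non-existence via the strict min--max gap, both of which the paper leaves implicit.
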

\begin{proof}
We know that $L_g^{-1}$ is a lower triangular matrix with diagonal elements equal to 1,  due to the fact that the diagonal elements of $L_g^{-1}$ in this case are the inverses of the in-degrees of the nodes and the in-degree of each node is 1. Thus, there exists at least one element 1 in each row and column  of $L_g^{-1}$. Moreover, based on Lemma \ref{lem:dirtreehinf}, $L_g^{-1}$ is a binary matrix. A NE state  should satisfy \eqref{eqn:sdgjb}. If $[L_g^{-1}]_{i^*j^*}=0$ then the left inequality in \eqref{eqn:sdgjb} will be violated and if $[L_g^{-1}]_{i^*j^*}=1$ the right inequality is violated unless the elements in the $i^*$-th row are all 1. This means, based on Lemma \ref{lem:dirtreehinf}, that there must be a directed path from any node to node $v_{j^*}$ and this means that $v_{j^*}$ is at the end of a directed path graph which yields the result. 
\end{proof}

\subsection{Equilibrium Analysis: $f>1$}

Similar to the case of undirected trees, for directed trees when $f>1$ we may or may not have NE in general, as shown in the following example.
\begin{example}
It can be easily checked that the attacker-detector game over the  directed tree with $f=2$ shown in Fig.~\ref{fig:1} (e) has a NE, whereas it does not admit a NE over the graph in Fig.~\ref{fig:1} (f). It is because of the fact that the attackers chooses its target nodes from nodes $2,3,4$, since the first column of $L_g^{-1}$ is all 1 and choosing it will result in a larger payoff (Lemma \ref{lem:vanmeigh}). As the detector tries to maximize the payoff, it will also choose from these three nodes. Thus the  corresponding block in $L_g^{-1}$ is an identity matrix which does not admit a NE. 
\end{example}

\subsection{Stackelberg Game Approach $f>1$}

 Although for many directed trees there is no NE, similar to the case of undirected trees, we can show that performing the Stackelberg max-min game does not cost much computational effort.

\begin{theorem}
Let $\mathcal{G}_d$ be a directed tree with leader node $v_{\ell}$ and $m$ leader rooted paths satisfying Assumption 1. Then the objective function \eqref{eqn:stack} can be solved within $\mathcal{S}_{f,m}^2$ iterations, where $\mathcal{S}_{f,m}$ is the number of constrained partitions of $f$ into $m$ nonnegative integers, i.e., the integer solutions of \eqref{eqn:lkdn}.
\end{theorem}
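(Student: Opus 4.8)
The plan is to mirror the structure of the proof of Theorem~\ref{thm:costfew} for the undirected case, adapting it to the directed closed-form of $L_g^{-1}$ given in Lemma~\ref{lem:dirtreehinf}. First I would observe that the same combinatorial decomposition applies: the $m$ leader-rooted paths partition the follower nodes, and any choice of $f$ attack (respectively sensor) locations is determined, up to which specific nodes within each path are selected, by a partition $f=\sum_{i=1}^m f_i$ with $0\le f_i\le \ell_i$. Hence the outer search over detector strategies and the inner search over attacker best responses each range over the $\mathcal{S}_{f,m}$ constrained partitions rather than over all $\binom{n}{f}$ subsets, which immediately gives the claimed $\mathcal{S}_{f,m}^2$ iteration count once we justify that the within-path placement is forced and free of combinatorial search.

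The key step is therefore to show that, within each leader-rooted path $i$, once $f_i$ is fixed, the optimal placement of the $f_i$ attacks (and the $f_i$ sensors) is determined without further optimization. For the directed case I would argue this directly from Lemma~\ref{lem:dirtreehinf}: since $[L_g^{-1}]_{ij}=1$ exactly when there is a directed path from $j$ to $i$, the relevant monotonicity along a directed path is that a node closer to the leader ``reaches'' a superset of the nodes reached by a node farther along the path. Concretely, if two columns correspond to nodes $v_p$ and $v_q$ on the same directed path with $v_p$ upstream of $v_q$, then column $p$ dominates column $q$ entrywise, so by Lemma~\ref{lem:vanmeigh} the attacker (minimizer) prefers the downstream—i.e. closer-to-leaf—nodes and the detector prefers the upstream nodes. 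This is the analogue of the contradiction argument used in Theorem~\ref{thm:costfew}: any claimed optimal selection that skips a preferred node in favour of a dominated one can be improved, contradicting optimality. I would make this precise by the same ``suppose a preferred column is unused while a dominated one is used, then swapping weakly improves the objective'' exchange argument, invoking the non-decreasing property of $\sigma_{\max}$ on nonnegative matrices.

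With the within-path placement pinned down, the complexity claim follows mechanically. For each of the $\mathcal{S}_{f,m}$ partitions available to the detector, the attacker's best response \eqref{eqn:stack} is itself computed by ranging over its own $\mathcal{S}_{f,m}$ partitions and evaluating $\sigma_{\max}(C L_g^{-1} B)$ on the resulting fixed submatrix, giving $\mathcal{S}_{f,m}^2$ singular-value evaluations in total, exactly as in Algorithm~1. I would close by noting that $\mathcal{S}_{f,m}$ depends only on $f$ and the path structure, not on the total number of nodes $n$, so the procedure does not scale with network size.

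I expect the main obstacle to be the within-path dominance argument. The undirected case rests on Lemma~\ref{lem:treehinf}, where column entries grow with shared path length in a transparently monotone way; in the directed case the binary structure of Lemma~\ref{lem:dirtreehinf} means the entrywise comparison between two columns on the same path is a containment of reachable sets, which I must verify holds for \emph{every} row simultaneously (including rows indexed by nodes on other paths) so that the entrywise domination—and hence the application of Lemma~\ref{lem:vanmeigh}—is valid. The subtlety is confirming that a downstream node on path $i$ reaches no node that an upstream node on path $i$ fails to reach, across the full node set and not merely within path $i$; this requires care because of the branching structure of the directed tree, and getting the reachability containment exactly right is where the real work lies.
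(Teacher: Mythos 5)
Your overall strategy matches the paper's: reduce both players' searches to the $\mathcal{S}_{f,m}$ constrained partitions of $f$ over the $m$ leader-rooted paths, argue that the within-path placement is forced by entrywise domination plus Lemma~\ref{lem:vanmeigh}, and conclude the $\mathcal{S}_{f,m}^2$ iteration count. The reachability containment you flag as the ``real work'' is in fact immediate: in a tree with edges directed away from $v_\ell$, every node reachable from a downstream node $v_q$ is reached \emph{through} $v_q$ from any upstream node $v_p$ on the same path, so the descendant set of $v_q$ is contained in that of $v_p$, and the column domination holds across all rows, including those on other branches.

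The genuine error is in the detector's forced placement. You derive (correctly) that upstream columns dominate downstream columns entrywise, hence the attacker, who selects \emph{columns} via $B$, prefers the downstream end of each path. But you then assert that the detector prefers the upstream nodes, apparently by mirroring the attacker's preference. The detector selects \emph{rows} via $C$, and $L_g^{-1}$ here is not symmetric: by Lemma~\ref{lem:dirtreehinf}, row $i$ has a $1$ in column $j$ exactly when $v_j$ is an ancestor of $v_i$, so the \emph{downstream} row dominates the upstream row entrywise (a deeper node has more ancestors). The maximizer therefore also places its sensors at the far (leaf) end of each path --- this is precisely the paper's statement that both the $f_i$ attacks and the $f_i$ sensors go to the end of partition $i$, and it is the point where the directed case differs from the undirected one (where the attacker goes to the near end). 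With your placement rule the enumerated detector strategies are suboptimal, so the $\mathcal{S}_{f,m}^2$-iteration procedure you describe would not actually solve \eqref{eqn:stack}; the complexity bound survives only once the within-path placement is corrected.
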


\begin{proof}
 The procedure of the proof is similar to that of Theorem \ref{thm:costfew}. However, in this case the attackers or detectors selected for each partition $i$, called $f_i$, are placed in the end of the partition.  
\end{proof}

 The following theorem compares the value of the attacker-detector game when the underlying networks are directed and undirected trees. The proof is straightforward based on Lemmas \ref{lem:treehinf} and \ref{lem:dirtreehinf}
as well as the monotonicity of the largest singular value, mentioned in Lemma \ref{lem:vanmeigh}. 

\begin{theorem}\label{thm:comparison}
Let $\mathcal{G}_d$ be a directed tree with leader node $v_{\ell}$ and $\mathcal{G}_u$ be its corresponding undirected graph (by removing directions from the edges). Let the value of the Stackelberg game between $f$ attackers and detectors on $\mathcal{G}_d$ and $\mathcal{G}_u$ be $J_d$ and $J_u$, respectively. Then we have $J_d\leq J_u$. 
\end{theorem}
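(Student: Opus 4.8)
The plan is to reduce the statement to an entrywise comparison of the two grounded-Laplacian inverses and then push that comparison through the $\max$--$\min$ structure of the Stackelberg value using the monotonicity of $\sigma_{\max}$ from Lemma~\ref{lem:vanmeigh}. Write $M_d = L_g^{-1}$ for the directed tree $\mathcal{G}_d$ and $M_u = L_g^{-1}$ for its undirected version $\mathcal{G}_u$. Since $\mathcal{G}_u$ is obtained from $\mathcal{G}_d$ only by erasing edge orientations, both graphs share the same vertex set, the same leader $v_\ell$, and the same node ordering, so $M_d$ and $M_u$ are indexed identically and any attacker/detector selection $(B,C)$ picks out the same column set and row set in each matrix. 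Recalling that the game value is $\max_C\min_B \sigma_{\max}(C L_g^{-1}B)$, this alignment is what lets us compare term by term.

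The first and main step is the entrywise bound $[M_d]_{ij} \le [M_u]_{ij}$ for every follower pair $(i,j)$. Under Assumption~1 each follower has in-degree one and is reachable from $v_\ell$, so $\mathcal{G}_d$ is an out-arborescence rooted at the leader; consequently a directed path from $v_j$ to $v_i$ exists exactly when $v_j$ lies on the (undirected) leader-to-$v_i$ path, i.e.\ when $\mathcal{P}_{j\ell}\subseteq \mathcal{P}_{i\ell}$. If such a path exists, Lemma~\ref{lem:dirtreehinf} gives $[M_d]_{ij}=1$, while Lemma~\ref{lem:treehinf} gives $[M_u]_{ij}=|\mathcal{P}_{i\ell}\cap\mathcal{P}_{j\ell}|=|\mathcal{P}_{j\ell}|\ge 1$, since the leader-to-$v_j$ path contains at least one edge. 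If no such directed path exists, then $[M_d]_{ij}=0\le[M_u]_{ij}$ because $M_u$ is nonnegative. Hence $M_d\le M_u$ entrywise, with both matrices nonnegative.

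With the entrywise bound in hand, the remaining steps are routine monotonicity arguments. For any fixed detector choice $C$ and attacker choice $B$, the selected submatrices satisfy $C M_d B \le C M_u B$ entrywise and are nonnegative, so Lemma~\ref{lem:vanmeigh} gives $\sigma_{\max}(C M_d B)\le \sigma_{\max}(C M_u B)$. Because this holds for every $B$, taking the minimum over $B$ preserves it: if $B^\star$ minimizes the undirected objective, then $\min_B \sigma_{\max}(C M_d B)\le \sigma_{\max}(C M_d B^\star)\le \sigma_{\max}(C M_u B^\star)=\min_B \sigma_{\max}(C M_u B)$ for each $C$. Taking the maximum over $C$ preserves the inequality by the same argument applied to the maximizer, yielding $J_d = \max_C\min_B\sigma_{\max}(C M_d B)\le \max_C\min_B\sigma_{\max}(C M_u B)=J_u$.

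The only genuinely substantive point is the first step: reading Assumption~1 as forcing the arborescence structure, and then translating the binary ``directed path'' criterion of Lemma~\ref{lem:dirtreehinf} into the containment $\mathcal{P}_{j\ell}\subseteq\mathcal{P}_{i\ell}$ that makes the undirected path-intersection count at least one. Everything afterward is pure monotonicity: $\sigma_{\max}$ is entrywise nondecreasing on nonnegative matrices, and both $\min$ and $\max$ preserve a pointwise inequality between two functions.
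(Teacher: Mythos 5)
Your proof is correct and follows exactly the route the paper indicates (the paper only states that the result is ``straightforward'' from Lemmas~\ref{lem:treehinf}, \ref{lem:dirtreehinf} and \ref{lem:vanmeigh}): an entrywise bound $[L_{g_d}^{-1}]_{ij}\le[L_{g_u}^{-1}]_{ij}$ derived from the two closed-form expressions, followed by the monotonicity of $\sigma_{\max}$ pushed through the $\max$--$\min$. Your write-up simply supplies the details the paper omits, including the useful observation that a directed path from $v_j$ to $v_i$ corresponds to $\mathcal{P}_{j\ell}\subseteq\mathcal{P}_{i\ell}$, which forces $|\mathcal{P}_{i\ell}\cap\mathcal{P}_{j\ell}|\ge 1$.
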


\section{Application to Secure platooning}\label{sec:platoongame}

In this section, we consider  a network of connected vehicles and  study the attacker-detector game for its {\it cooperative adaptive cruise control dynamics}. In this setting, the objective for each follower vehicle  is to track a reference velocity (computed by the leader vehicle to optimize a certain objective, e.g., fuel consumption \cite{van2015fuel}) while the vehicle remains in a safe distance from its neighboring vehicles. We use the results of the previous sections to study the existence of NE for the attacker-detector game in the platooning application and its corresponding game value. Note that the underlying connectivity graph is a line in the platooning application. This property allows us to provide a more detailed equilibrium analysis of the attacker-detector game for the vehicle platooning application compared with the attacker-detector game over trees.

Consider a connected network of $n$ vehicles. The position and longitudinal velocity of each vehicle $v_i$ is denoted by scalars $p_i(t)$ and $u_i(t)$, respectively. Each vehicle $v_i$ is able to communicate with its neighbor vehicles and transfer its kinematic parameters, e.g., velocity. \footnote{Transmitting vehicle's states such as velocity is common in standard short-range vehicular communications \cite{IEEE1609}.} 
  The desired vehicle formation will be determined by specific constant inter-vehicular distances. Let $\Delta_{ij}$ denote the desired distance between vehicles $v_i$ and $v_j$. The desired vehicle formation and velocity tracking is schematically shown in Fig.~\ref{fig:ss54aefffnj}.
 \begin{figure}[t!]
\centering
\includegraphics[scale=.28]{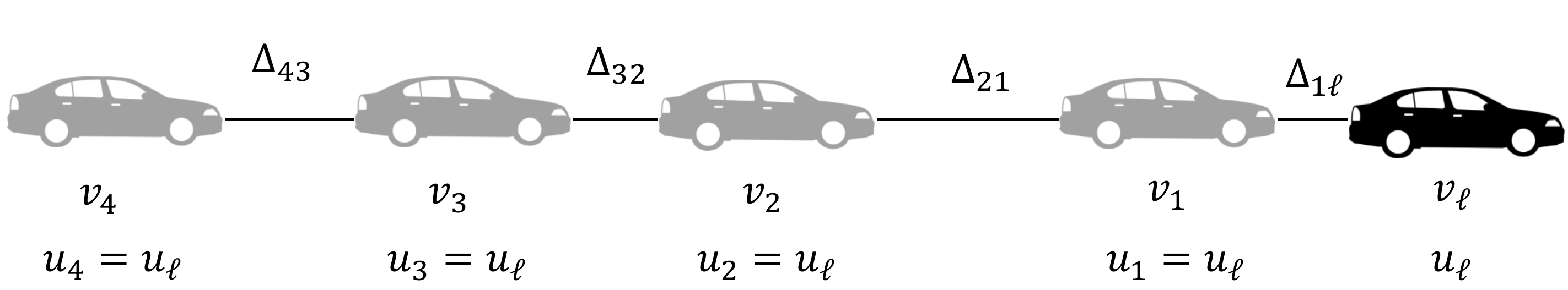}
\caption{Desired inter-vehicular distances $\Delta_{ij}$ and velocity $u_{\ell}$ in a cooperative adaptive cruise control strategy.}
\label{fig:ss54aefffnj}
\end{figure}
 Considering the fact that each vehicle $v_i$ has access to its own position,  the positions of its neighboring vehicles,  and the desired inter-vehicular distances $\Delta_{ij}$, the dynamics of  vehicle $v_i$ can be expressed as  \cite{hao2013stability}
\begin{align}
\ddot{p}_i(t)=\sum_{j\in \mathcal{N}_i}k_p \left (p_j(t)-p_i(t)+\Delta_{ij} \right )\nonumber\\
+k_u \left (u_j(t)-u_i(t)\right)+w_i(t),
\label{eqn:singlle}
\end{align}
where $k_p,k_u>0$ are control gains and $w_i(t)$ models a possible signal attack. The dimension of the attack signal is the same as the dimension of acceleration. This physically means that the attacker, in addition to the defined feedback protocol, applies an additive traction force $F_i(t)=M_iw_i(t)$ to vehicle $v_i$, where $M_i$ is the mass of $v_i$. Dynamics \eqref{eqn:singlle} in matrix form become
\begin{align}
\dot{\boldsymbol{x}}(t)&=\underbrace{\begin{bmatrix}
       \mathbf{0}_{n} & I_{n}         \\[0.3em]
     -k_pL_g & -k_uL_g
     \end{bmatrix}}_{A}{\boldsymbol{x}}(t)+\underbrace{\begin{bmatrix}
       \mathbf{0}_{n\times 1}        \\[0.3em]
    k_p\mathbf{\Delta}   
     \end{bmatrix}}_{B}+\underbrace{\begin{bmatrix}
       \mathbf{0}_{n}        \\[0.3em]
    B   
     \end{bmatrix}}_{F}\mathbf{w}(t),\nonumber\\
     \boldsymbol y(t)&=[ \mathbf{0}_{n} \quad C]\boldsymbol x(t)
\label{eqn:doub}
\end{align}
 where $\boldsymbol{x}=[\mathbf{P} \hspace{2mm} \dot{\mathbf{P}}]^{\sf T}=[{{p}}_1, {{p}}_2, ..., {{p}}_n, \dot{{{p}}}_1, \dot{{{p}}}_2,  ..., \dot{{{p}}}_n]^{\sf T} $, $\mathbf{\Delta}=[\Delta_1, \Delta_2, ..., \Delta_n]^{\sf T}$ in which $\Delta_i=\sum_{j\in \mathcal{N}_i}\Delta_{ij}$. Here $\mathbf{w}(t)$ is the vector of attacks and $\boldsymbol y(t)$ is the vector of sensor measurements. The reason of choosing such an output is that the vehicle longitudinal velocity is available in real-time through either direct GPS measurements or from the estimation with an acceptable accuracy \cite{PiraniTCST}. In order to find the transfer function from the attack signal to the measurements, we take Laplace transform from the acceleration part, second row of  \eqref{eqn:doub}, assuming zero initial condition, which yields
 \begin{align}
s^2X(s)=-k_pL_gX(s)-sk_uL_gX(s)+BW(s),
 \end{align}
where $X(s)$ and $W(s)$ are Laplace transforms of $\boldsymbol x(t)$ and $\boldsymbol w(t)$, respectively. This results in 
\begin{align}\label{eqn:ansfkdjb}
Y(s)=CX(s)=C\left(\underbrace{s^2I+(sk_u+k_p)L_g}_{\Bar{A}(s)}\right)^{-1}BW(s),
 \end{align}

Note that the system \eqref{eqn:doub} is no longer positive and its $L_2$ gain happens at some nonzero frequency. However, since a vehicle has a large mass and inertia, an informed  attacker will not inject a high frequency attack signal in this system. This is because the attack signal changes the acceleration and it is impossible to change a vehicle's acceleration abruptly due its large mass. As a result, a high frequency attack signal can be easily detected by inspecting the received information from the neighboring vehicles. Therefore, in what follows, we assume that the attack signal is slowly varying in time. Under this assumption, we study the attacker-detector game for \eqref{eqn:doub} when the attacker's objective is to minimize the zero frequency of the transfer function whereas the defender's objective is to maximize this quantity. More formally, the objective function of each player in can be written as 
\begin{align}\label{eqn:ansfsdkdjb}
G(0)=C\Bar{A}(0)^{-1}B=\frac{1}{k_p}CL_g^{-1}B,
 \end{align}
In the following subsections we investigate the existence of NE for the attacker-detector game in vehicular platoons where the inter-vehicular communications can be directed or undirected. Moreover, we discuss the effect of the position of the leading vehicle, i.e., leader placement, on the game payoff.

\subsection{Undirected Platoon}

The following proposition discusses the existence of NE for the attacker-detector game in a platoon of vehicles over an undirected path graph (symmetric interactions).

\begin{proposition}
let $\mathcal{G}_u$ be an undirected path graph, corresponding to a platoon of vehicles, with leader vehicle $v_{\ell}$ in one end of the path. Then, for any $f\geq 1$, the attacker detector game admits at least one Nash equilibrium for game \eqref{eqn:ansfsdkdjb} which happens when the attacker chooses $f$ closest nodes to the leader and the detector chooses farthest $f$ nodes from the leader. 
\end{proposition}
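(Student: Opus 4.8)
The plan is to reduce the proposition to two monotone best-response computations built on a closed form for $L_g^{-1}$. First I would index the $n-1$ follower vehicles by their distance from the leader, writing node $i$ for the vehicle at distance $i$ from $v_{\ell}$ along the path, so that $v_1$ is closest and $v_{n-1}$ is farthest. Since the underlying graph is a path with $v_{\ell}$ at one end, the leader-to-$v_i$ and leader-to-$v_j$ paths are nested, and Lemma \ref{lem:treehinf} collapses to the explicit formula
\begin{equation}
[L_g^{-1}]_{ij}=|\mathcal{P}_{i\ell}\cap \mathcal{P}_{j\ell}|=\min(i,j).
\end{equation}
I would record two structural consequences: the payoff matrix $\frac{1}{k_p}L_g^{-1}$ is nonnegative, and each entry $\min(i,j)$ is non-decreasing in both the row index $i$ and the column index $j$. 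The positive scalar $1/k_p$ only rescales $\sigma_{\max}$ and hence affects neither the minimizer nor the maximizer, so it may be dropped.

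Next I would recall that, in a zero-sum game, the profile $(B^\star,C^\star)$ is a Nash equilibrium precisely when each player's strategy is a best response to the other's. Taking $B^\star$ to encode the closest $f$ columns $\{1,\dots,f\}$ and $C^\star$ the farthest $f$ rows $\{n-f,\dots,n-1\}$, it suffices to verify the two best-response conditions separately. For the detector's condition, fix the attacker at $B^\star$; any admissible detector selects $f$ rows, and since $\min(i,j)$ is non-decreasing in the row index, the submatrix obtained from the farthest rows dominates entrywise (after sorting rows) the submatrix from any other row selection, so by Lemma \ref{lem:vanmeigh} its largest singular value is maximal and $C^\star$ is a best response to $B^\star$. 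The attacker's condition is the mirror image: fix the detector at $C^\star$; since $\min(i,j)$ is non-decreasing in the column index, the closest columns yield the entrywise-smallest submatrix (after sorting columns), whose largest singular value is minimal by Lemma \ref{lem:vanmeigh}, so $B^\star$ is a best response to $C^\star$. The two conditions together give the claimed equilibrium for every $f\ge 1$.

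The one point demanding care is the entrywise comparison itself, because distinct row/column selections produce distinct submatrices that must be aligned before Lemma \ref{lem:vanmeigh} applies. I would sort both the chosen and the extremal index sets and match them positionally: writing the $k$-th smallest chosen column as $c_k$, distinctness forces $c_k\ge k$, and dually the $k$-th smallest chosen row $r_k$ satisfies $r_k\le n-f+k-1$; monotonicity of $\min(r,\cdot)$ and $\min(\cdot,c)$ then delivers the required entrywise domination position-by-position. Since permuting rows or columns leaves $\sigma_{\max}$ unchanged, the comparison is legitimate, and notably this argument never invokes disjointness of the two index sets, so it also covers the overlapping regime $2f>n-1$ without modification. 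I expect this sorting-and-alignment bookkeeping, rather than any analytic difficulty, to be the main obstacle.
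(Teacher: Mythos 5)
Your proof is correct and follows essentially the same route as the paper: identify the monotone structure of $L_g^{-1}$ for the path (the paper points to its Fig.~5(a), you derive $[L_g^{-1}]_{ij}=\min(i,j)$ from Lemma~\ref{lem:treehinf}), then verify both best-response conditions via entrywise domination and Lemma~\ref{lem:vanmeigh}. Your sorting-and-alignment step ($c_k\ge k$, $r_k\le n-f+k-1$) makes rigorous the entrywise comparison that the paper asserts as ``easy to verify,'' and correctly handles the overlapping case $2f>n-1$, so if anything your write-up is tighter than the original.
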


\begin{proof}
Based on a specific structure of $L_g^{-1}$ for this topology, as shown in Fig.~\ref{fig:example} (a), the NE is obtained when the attacker chooses first $f$ columns of $L_g^{-1}$ and the detector chooses last $f$ rows of  $L_g^{-1}$. If we denote such row and column selections by the attacker and detector by $B^*$ and $C^*$, according to Lemma \ref{lem:vanmeigh}, it is easy to verify that 
\begin{align}
\sigma_{\max}(CL_g^{-1}B^*)\leq \sigma_{\max}(C^*L_g^{-1}B^*)\leq \sigma_{\max}(C^*L_g^{-1}B)
\end{align}
where $C$ and $B$ are any combination of $f$ rows and columns of $L_g^{-1}$, respectively. In words, any unilateral deviation of the attacker's decision will result in increasing the elements of $C^*L_g^{-1}B^*$, which in turn results in increasing $\sigma_{\max}(C^*L_g^{-1}B)$ (based on lemma \ref{lem:vanmeigh}). Moreover, if $n\leq 2f$ then any unilateral deviation of the detector's choice decreases $\sigma_{\max}(CL_g^{-1}B^*)$. For $n>2f$ the unilateral change in detector's selection may not change the game payoff (the elements of $CL_g^{-1}B$ remain unchanged) which results in multiple NEs with the same value. 
\begin{figure}[t!]
\centering
\includegraphics[scale=.42]{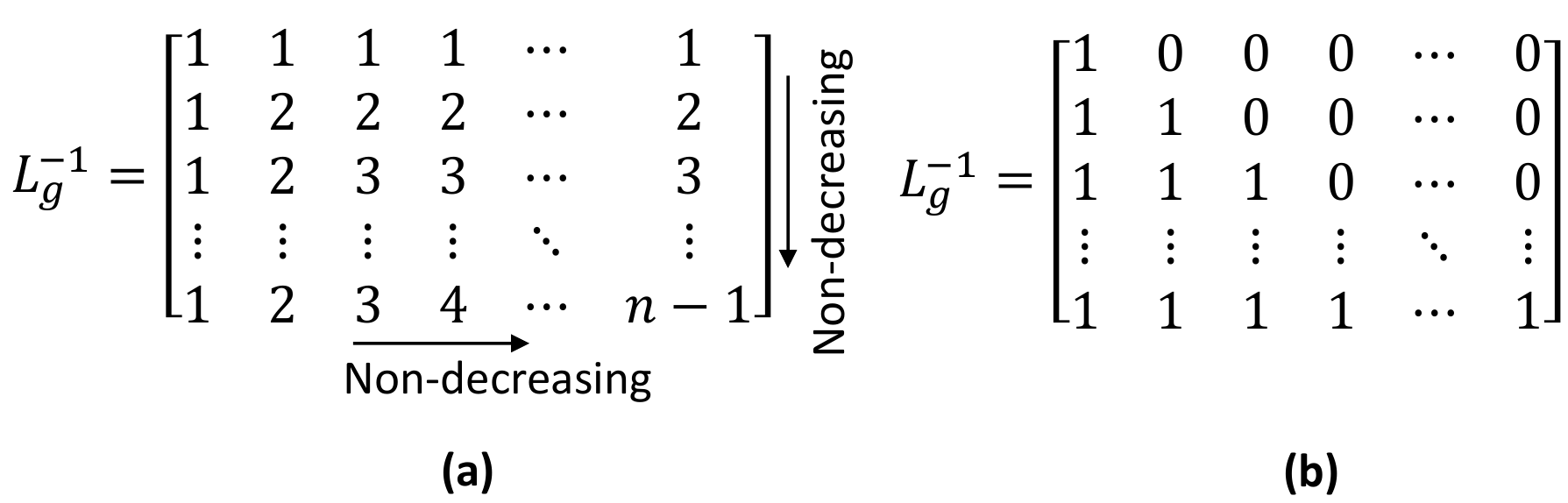}
\caption{Matrix canonical structure for the undirected (a) and directed (b) path graphs.}
\label{fig:example}
\end{figure}
\end{proof}

\begin{figure}[t!]
\centering
\includegraphics[scale=.32]{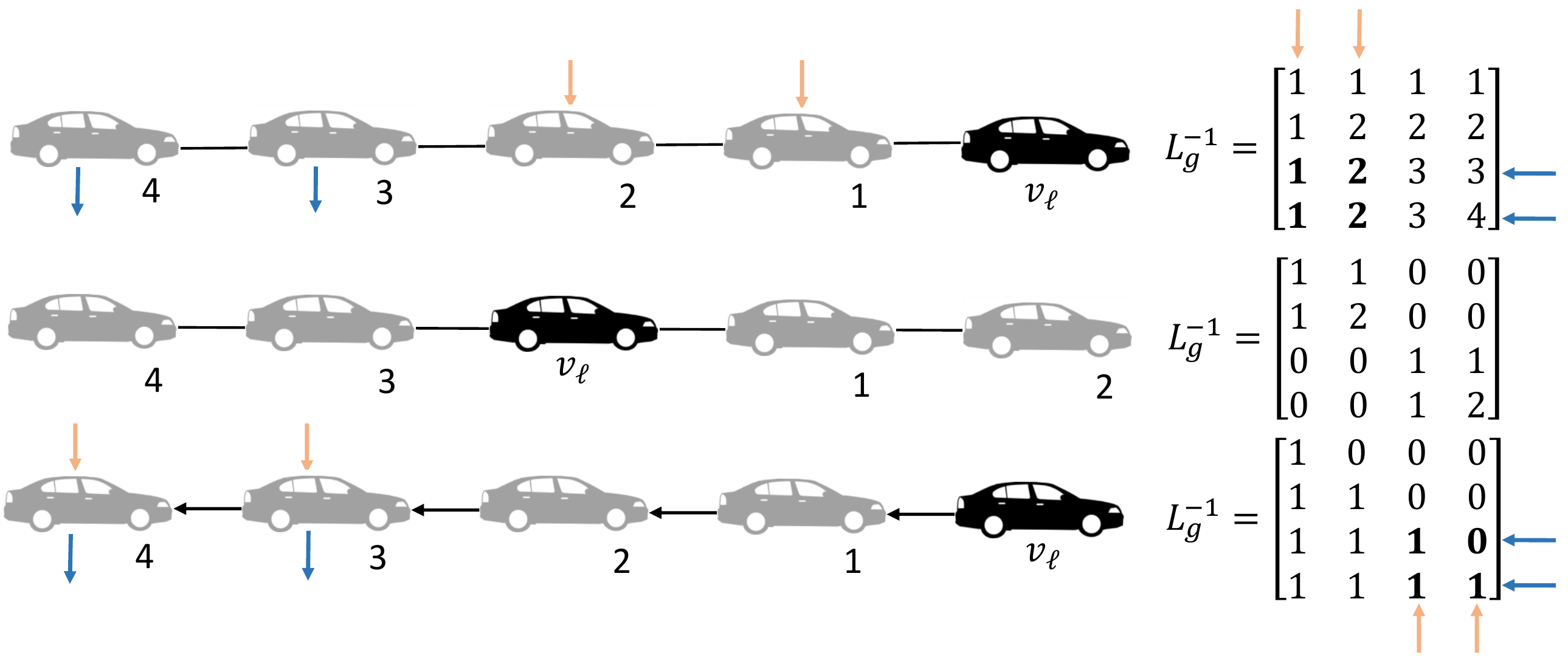}
\caption{A path graph with two leader positions and their corresponding Nash states.}
\label{fig:sssgtdnj}
\end{figure}

\subsubsection{Interplay Between Leader Placement and Security}

Here, we study the impact of the leader's location in a platoon of vehicles on the  value of the attacker-detector game. To study the impact of the leader placement, we use the Stackelberg formulation of the attacker-detector game.  This is because the fact that the attacker-detector game may not admit a NE if the leader is a located at a cut vertex of the underlying connectivity graph (based on Theorem \ref{thm:ssodjvn}).

\begin{theorem}
Consider the Stackelberg attacker-detector game for a platoon of vehicles over an undirected path. Let $J_1$ and $J_2$ denote the corresponding game values when $v_{\ell}$ is not at the head of the platoon  and when $v_{\ell}$ is at the head of the platoon, respectively. Then, we have $J_1 \leq J_2$.
\end{theorem}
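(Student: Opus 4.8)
The plan is to read off $L_g^{-1}$ in each leader configuration from Lemma~\ref{lem:treehinf}, exhibit an entrywise domination between the two resulting matrices, and then push that domination through the max-min that defines the Stackelberg value, using the monotonicity of $\sigma_{\max}$ from Lemma~\ref{lem:vanmeigh}. Recall that, with the detector as leader, the game value is $\max_{|R|=f}\min_{|S|=f}\sigma_{\max}(CL_g^{-1}B)$, where $R$ (the sensor rows, encoded by $C$) and $S$ (the attack columns, encoded by $B$) select a submatrix of $L_g^{-1}$.

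First I would fix the two matrices. When $v_\ell$ sits at the head, indexing the $N:=n-1$ followers by their distance to the leader along the single path, Lemma~\ref{lem:treehinf} gives the staircase matrix $[L_g^{-1}]_{ij}=\min(i,j)$; call it $M$. When $v_\ell$ is an interior vertex it is a cut vertex splitting the path into a left sub-path of length $a$ and a right sub-path of length $b$, with $a+b=N$ and $a,b\ge 1$. Indexing each sub-path by distance to the leader and listing the left nodes first, Lemma~\ref{lem:treehinf} yields
\[
\tilde M=\begin{bmatrix} M_1 & \mathbf 0\\ \mathbf 0 & M_2\end{bmatrix},
\]
where $M_1,M_2$ are the staircase matrices $\min(\cdot,\cdot)$ of sizes $a$ and $b$: two followers on opposite sides of $v_\ell$ share no edge on their leader-paths, so the off-diagonal blocks vanish. (If instead $v_\ell$ is at the other end, $\tilde M=M$ by symmetry, which is the equality case.)

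Next I would align the two index sets by the bijection matching the $k$-th left node and the $k$-th right node to the $k$-th and $(a+k)$-th node of the single path, and check $M\ge\tilde M\ge 0$ entrywise under this identification. The top-left $a\times a$ blocks coincide (both equal $M_1$); the bottom-right $b\times b$ block of $M$ has entries $\min(a+p,a+q)=a+\min(p,q)\ge\min(p,q)=[M_2]_{pq}$; and the off-diagonal blocks of $M$ are strictly positive while those of $\tilde M$ are zero. Since the game value $\max_{|R|=f}\min_{|S|=f}\sigma_{\max}$ of a selected submatrix is invariant under relabeling of the followers, this bijection is legitimate. With the domination in hand I would propagate it: for every fixed $R$ and $S$, the submatrices satisfy $M[R,S]\ge\tilde M[R,S]\ge 0$, so Lemma~\ref{lem:vanmeigh} gives $\sigma_{\max}(M[R,S])\ge\sigma_{\max}(\tilde M[R,S])$; taking $\min_S$ preserves the inequality, and then $\max_R$ preserves it as well, yielding $J_2\ge J_1$.

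The main obstacle I anticipate is the bookkeeping in the domination step: one must choose the relabeling so that the bottom-right block and the off-diagonal blocks of the full staircase $M$ genuinely dominate their $\tilde M$ counterparts, and argue that the max-min value is unchanged under that relabeling. I would also note the degenerate short-path situations ($a<f$ or $b<f$), but these do not affect the argument, since the domination $M\ge\tilde M$ and the monotonicity of $\sigma_{\max}$ hold for arbitrary admissible $R,S$ regardless of path lengths or saturation of the staircase. Once the entrywise comparison is established, the monotonicity-through-max-min is routine.
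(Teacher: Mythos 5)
Your proof is correct, but it follows a genuinely different route from the paper's. The paper fixes the detector's choice $C$, compares the two attacker best-response submatrices entry by entry through a case-based contradiction argument about which columns an optimal attacker must select, and then lifts the comparison to the detector's outer maximization. You instead establish a global entrywise domination between the two inverse grounded Laplacians and push it through the max-min generically: writing $M_{ij}=\min(i,j)$ for the head-leader case and the block-diagonal $\tilde M$ for the interior-leader case, you verify $M\ge\tilde M\ge 0$ under a relabeling by distance to the leader (left sub-path first), and then the chain $\sigma_{\max}(M[R,S])\ge\sigma_{\max}(\tilde M[R,S])$ for all $R,S$, followed by $\min_S$ and $\max_R$, finishes the argument without ever reasoning about best responses. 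This buys real robustness: under the paper's physical head-to-tail labeling the raw entrywise comparison $[L_{g_1}^{-1}]_{ij}\le[L_{g_2}^{-1}]_{ij}$ actually fails for pairs of nodes on the far side of an interior leader (e.g.\ $n=10$, $k=2$, $i=j=9$ gives $7$ versus $1$), which is precisely why the paper must restrict attention to the columns an optimal attacker would pick; your observation that the max-min value is invariant under a simultaneous relabeling of rows and columns lets you choose the bijection so that domination holds everywhere, after which Lemma~\ref{lem:vanmeigh} and the elementary monotonicity of $\max\min$ in the payoff matrix do all the work. The one point worth stating explicitly in a final write-up is the relabeling-invariance itself (singular values of a selected submatrix do not depend on the ordering of the selected rows and columns), but you do flag it, and the degenerate short-block cases are handled correctly since the domination is unconditional.
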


\begin{proof}
We label the nodes from one head to the other such that for case 1 (when the leader is in the middle of the platoon), the leader is node $k$ and for case 2 the leader is node $n$.
Define $A_1=CL_{g_1}^{-1}B$ and $A_2=CL_{g_2}^{-1}B$ where $L_{g1}$ and $L_{g2}$ are grounded Laplacian matrices corresponding to case 1 and 2, respectively. For fix and identical matrix $C$ for both cases, we show that $a_{ij}^1\leq a_{ij}^2$, where $a_{ij}^1$ and $a_{ij}^2$ are $ij$-th elements of $A_1$ and $A_2$, respectively. Then for $C^*=\arg\max_C\min_BCL_{g_1}^{-1}B$ which gives the optimal payoff for the max-min game in case $1$, we see that this $C^*$ provides even larger payoff in case 2, which proves the claim.
To prove $a_{ij}^1\leq a_{ij}^2$ for all $i,j=1, 2,..., n$, we use contradiction.  Note that the $i$-th row in matrices $A_1$ and $A_2$ are chosen from the same row index in $L_{g_1}^{-1}$ and $L_{g_2}^{-1}$, since $C$ is fixed to be the same for both cases. Two possibilities: (1) $a_{ij}^1$ is chosen from columns 1 to $k$ in matrix $L_{g_1}^{-1}$. In this case $a_{ij}^1>a_{ij}^2$ (negation of the claim) means that $a_{ij}^1$ is chosen from column $j_1$ and $a_{ij}^2$ is chosen from column $j_2$ where $j_1>j_2$. Thus either there exists a free column (column which is not chosen by the attacker) in $L_{g_1}^{-1}$ in the left hand side of $j_1$ (which contradicts the optimality of the attacker's strategy), or there is no free column in the left side of $j_1$ and since all columns $1, 2,..., j_2$ in $L_{g_2}^{-1}$ are chosen by the optimal attacker, it results in $j_1=j_2$ and since $[L_{g_1}^{-1}]_{ij}\leq [L_{g_2}^{-1}]_{ij}$,  then the initial assumption $a_{ij}^1>a_{ij}^2$ is not true. (2) $a_{ij}^1$ is chosen from columns $k+1$ to $n$. In this case, if $j_1=k+1$, then we have $a_{ij}^1\leq a_{ij}^2$ for all $j_2=1,2, ..., n$ which contradicts the assumption. If $j_1>k+1$, the proof is similar to case (1).
\end{proof}

According to the above theorem, the game value of the attacker-detector game, in a platoon of vehicles, decreases if the leader is moved to an intermediate position in the platoon. We note that it has been shown that changing the leader's position improves the robustness of the platoon dynamics to the communication disturbances \cite{Piranikalle}. However, our results show that a platoon of vehicles will be more secure when the leader is located at the head of the platoon.

\subsection{Directed Platoon}

The role of communication direction on the performance and disturbance rejection in vehicle platooning has been addressed in the literature, under the name of {\it predecessor-following architecture} \cite{hao2013stability}. In this subsection we analyze the security of this platoon topology, which is shown in Fig. \ref{fig:sssgtdnj}, bottom. The following proposition discusses the existence of NE in directed platoons.

\begin{proposition}
Consider the attacker-detector game in a platoon of vehicles over a directed communication graph. Then, for any $f\geq 1$, there exists a Nash equilibrium for the game which belongs to the case where both the attacker and the detector choose farthest $f$  nodes from the leader. 
\end{proposition}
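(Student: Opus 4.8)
The plan is to combine the explicit form of $L_g^{-1}$ from Lemma \ref{lem:dirtreehinf} with the monotonicity of the largest singular value in Lemma \ref{lem:vanmeigh}, and then directly verify the two Nash inequalities. First I would fix coordinates: a directed platoon is the directed path $v_\ell \to 1 \to 2 \to \cdots \to N$ (with $N=n-1$ followers), so a directed path from node $j$ to node $i$ exists precisely when $j\leq i$. By Lemma \ref{lem:dirtreehinf}, $[L_g^{-1}]_{ij}=1$ for $j\leq i$ and $0$ otherwise, i.e. $L_g^{-1}$ is the lower-triangular all-ones matrix of Fig.~\ref{fig:example}(b). The proposed equilibrium strategies are $B^*$, where the attacker selects the last $f$ columns (nodes $N-f+1,\dots,N$), and $C^*$, where the detector selects the last $f$ rows; these are the farthest $f$ nodes from the leader. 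It then suffices to establish
\begin{equation}\label{eqn:neineq}
\sigma_{\max}\!\left(CL_g^{-1}B^*\right)\leq \sigma_{\max}\!\left(C^*L_g^{-1}B^*\right)\leq \sigma_{\max}\!\left(C^*L_g^{-1}B\right)
\end{equation}
for every admissible $C$ and $B$.

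For the right inequality (attacker's best response), I would fix $C^*$ and examine the columns of $C^*L_g^{-1}$. Each column $j$, restricted to the last $f$ rows, is a \emph{suffix} indicator: it carries $\min(f,\,N-j+1)$ ones packed at the bottom. Thus column $N$ has a single one, column $N-1$ has two, and so on, while every column $j\leq N-f+1$ is the all-ones vector. The choice $B^*$ realizes exactly the suffix lengths $1,2,\dots,f$, one of each. For any other $B$, sorting its columns ascending by suffix length gives $\lambda_1\leq\cdots\leq\lambda_f$ with $\lambda_k\geq k$, because only the unique columns $N,N-1,\dots,N-f+2$ supply suffix lengths below $f$. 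Since a longer suffix vector dominates a shorter one entrywise, the sorted columns of $C^*L_g^{-1}B^*$ are entrywise no larger than those of $C^*L_g^{-1}B$; as $\sigma_{\max}$ is invariant under column permutation and non-decreasing in the entries (Lemma \ref{lem:vanmeigh}), the right inequality of \eqref{eqn:neineq} follows.

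For the left inequality (detector's best response), I would fix $B^*$ and examine the rows of $L_g^{-1}B^*$. Here every row $i\leq N-f$ is identically zero (node $i$ cannot reach any of the last $f$ nodes), and the rows $N-f+1,\dots,N$ are \emph{prefix} indicators of lengths $1,2,\dots,f$. Consequently $C^*$ selects precisely the $f$ densest (and only nonzero) rows, so any deviation $C$ must swap one of them for a zero row. Sorting rows by prefix length and arguing as above, the sorted rows of $CL_g^{-1}B^*$ are entrywise dominated by those of $C^*L_g^{-1}B^*$, and Lemma \ref{lem:vanmeigh} yields the left inequality. Together with \eqref{eqn:neineq} this shows $(B^*,C^*)$ is a Nash equilibrium for every $f\geq 1$, with the equilibrium submatrix being the lower-triangular all-ones block.

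The main obstacle is not either inequality in isolation but making the "sparsest columns / densest rows'' intuition rigorous: the cleanest route is the counting argument showing the $k$-th sorted suffix (prefix) length is bounded below (above) by $k$, which upgrades the qualitative monotonicity of Lemma \ref{lem:vanmeigh} into entrywise domination after a column (row) permutation. I would also note that only the non-strict form of Lemma \ref{lem:vanmeigh} is needed, since the Nash conditions in \eqref{eqn:neineq} are themselves non-strict; this avoids any appeal to irreducibility and accommodates the multiplicity of all-ones columns (all equivalent for the attacker) without difficulty.
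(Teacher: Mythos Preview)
Your proof is correct and follows essentially the same approach as the paper: both identify the lower-triangular all-ones structure of $L_g^{-1}$ from Lemma~\ref{lem:dirtreehinf} and then verify the two Nash inequalities via the monotonicity of $\sigma_{\max}$ in Lemma~\ref{lem:vanmeigh}. The paper's proof is a brief sketch that simply asserts the unilateral-deviation argument, whereas you supply the rigorous counting step (that the $k$-th sorted suffix/prefix length is at least/at most $k$) needed to turn Lemma~\ref{lem:vanmeigh} into an actual entrywise comparison after permutation.
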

\begin{proof}
 Due to the specific structure of $L_g^{-1}$, which is a lower triangular matrix with all triangle elements equal to 1, Fig.~\ref{fig:example} (b), the NE corresponds to the case where the detector is choosing the last $f$ rows and the attacker chooses last $f$ columns. Then by changing the choices unilaterally by the attacker and detector  and considering Lemma \ref{lem:vanmeigh} the NE is confirmed to exist.
\end{proof}

\begin{remark}\textbf{(Discussion on the Value of NE)}:
It can be easily shown that the game value of the attacker-detector game for the platoon of vehicles in the undirected communication case is higher than that with directed communication.  This shows that undirected platoon is a more secure structure than the directed platoon. Moreover, it can be shown that  the best position of the leader in the undirected platoon from the detector's perspective is the case where the leader is on the head of the platoon. 
\end{remark}

\section{Conclusion}
 \label{sec:conclusion}
An attacker-detector game  on a leader-follower network control system was studied, in which the attacker tries to minimize its visibility and the detector aims to maximize it. The game payoff was the system $L_2$ gain from the attack signal to the measurable outputs. Several conditions for the existence and the value of Nash equilibrium on both directed and undirected trees were studied. Moreover, the problem was studied under the Stackelberg game framework and it was shown that this game can be solved with low computational cost for large scale networks. At the end, these results were applied to vehicular platooning and the optimal network topology and leader position were investigated. A rich avenue for further studies is to extend these results from trees to more general topologies and heterogeneous communication weights.

\begin{appendix}

\subsection{Proof of Lemma \ref{lem:treehinf}}
\label{sec:lem2}

Before proving Lemma  \ref{lem:treehinf} we need some preliminary definitions.

A extension of the above theorem was presented in \cite{Miekkala}. Before that, we have the following definition.
\begin{definition}
 A spanning subgraph of a graph $\mathcal{G}$ is called a 2-tree of $\mathcal{G}$, if and only if, it has two components each of which is a tree. In other words, a 2-tree of $\mathcal{G}$ consists of two trees with disjoint vertices which together span $\mathcal{G}$.  One (or both) of the components may
consist of an isolated node. We refer to $t_{ab,cd}$ as a 2-tree where vertices $a$ and $b$ are in one component
of the 2-tree, and vertices $c$ and $d$ in the other. 
\end{definition}

Based on the above definition, we prove Lemma  \ref{lem:treehinf}.

\begin{proof}
From \cite{Chenapplied} we know that any first order cofactor (principal minor) of the Laplacian matrix $L$ is equal to the number of different spanning trees of the connected graph $\mathcal{G}$. Moreover, from \cite{Miekkala} we know that the second order cofactor ${\rm cof}(L)_{ij, \ell, \ell}$ of the Laplacian matrix $L$ is the
number of different 2-trees $t_{ij,\ell \ell}$ in the connected graph $\mathcal{G}$.   We  know that  $[L_g^
{-1}]_{ij}=\frac{{\rm cof}(L)_{ij, \ell, \ell}}{{\rm det}(L_g)}$. and since $\mathcal{G}$ is a tree (with one spanning tree) we have ${\rm det}(L_g)=1$ which yields $[L_g^
{-1}]_{ij}={\rm cof}(L)_{ij, \ell, \ell}$. Moreover, in $\mathcal{G}$ as a tree, the number of  2-trees  $t_{ij,\ell \ell}$ is equal to the number of trees which contain $v_i$ and $v_j$ and do not contain $v_{\ell}$ and that is equal to
$|\mathcal{P}_{i\ell}\cap \mathcal{P}_{j\ell}|$ which proves the claim.
\end{proof}

\subsection{Proof of Lemma \ref{lem:dirtreehinf}}
\label{sec:lem3}

\begin{proof}
Let $L_{g_d}$ and $L_{g_u}$ be grounded Laplacian matrices of a directed tree and its undirected counterpart, respectively. The proof is based on the fact that for a directed tree with one leader node $v_{\ell}$ we have $L_{g_d}^TL_{g_d}=L_{g_u}$ (proved in \cite{Piranikalle}) which results in $L_{g_d}^{-1}L_{g_d}^{-T}=L_{g_u}^{-1}$. Based on Lemma \ref{lem:treehinf}, we have $[L_{g_u}^{-1}]_{ij}=|\mathcal{P}_{i\ell}\cap \mathcal{P}_{j\ell}|$ which gives
\begin{equation}\label{eqn:aofisa}
[L_{g_u}^{-1}]_{ij}=|\mathcal{P}_{i\ell}\cap \mathcal{P}_{j\ell}|=[L_{g_d}^{-1}]_i[L_{g_d}^{-1}]_j^T
\end{equation}
where $[L_{g_d}^{-1}]_i$ is the $i$-th row of $L_{g_d}^{-1}$. Now consider another node $v_k$ in $\mathcal{G}$. If there is a directed path from $v_k$ to $v_i$ for some $v_k \in \mathcal{V}$, we set the $k$-th element of $[L_{g_d}^{-1}]_i$ equal to 1 and zero otherwise and doing the same work for row $[L_{g_d}^{-1}]_j$. If $v_k \in \mathcal{P}_{i\ell}\cap \mathcal{P}_{j\ell}$ in the undirected graph, then the $k$-th elements of both  $[L_{g_d}^{-1}]_i$ and $[L_{g_d}^{-1}]_j$ are 1 and likewise if we consider all elements of $\mathcal{P}_{i\ell}\cap \mathcal{P}_{j\ell}$, then equality \eqref{eqn:aofisa} will be satisfied and since it should hold for all $i,j=1, 2,..., n-1$, this solution will be unique. 
\end{proof}

\end{appendix}
\bibliographystyle{plain}
\bibliography{biblio}
\end{document}